\documentclass[journal,twoside,web]{ieeecolor}
\usepackage{generic}
\usepackage{cite}
\usepackage{amsmath,amssymb,amsfonts}
\usepackage{algorithmic}
\usepackage{graphicx}
\usepackage{algorithm,algorithmic}
\usepackage{hyperref}
\hypersetup{hidelinks=true}
\usepackage{textcomp}
\usepackage{stmaryrd}
\usepackage{mathrsfs}
\usepackage{tikz}
\usetikzlibrary{shapes.geometric, arrows}
\graphicspath{{images/}}
\usepackage[colorinlistoftodos]{todonotes}
\usepackage{caption}
\usepackage{subcaption}
\usepackage{bm}
\usepackage[T1]{fontenc}
\usepackage{tcolorbox}
\usepackage{textcomp}
\usepackage{bigints}
\usepackage{cleveref}
\usepackage{siunitx}
\usepackage{textgreek}

\newcommand{\norm}[1]{\left\lVert#1\right\rVert}

\newcommand\xqed[1]{%
	\leavevmode\unskip\penalty9999 \hbox{}\nobreak\hfill
	\quad\hbox{#1}}
\newcommand\demo{\xqed{$\triangle$}}

\newcommand{\dt}{\mathrm{d}t}
\newcommand{\dhatt}{\mathrm{d}\hat{t}}

\newcommand\restr[2]{{
		\left.\kern-\nulldelimiterspace 
		#1 
		\littletaller 
		\right|_{#2} 
}}

\newcommand{\littletaller}{\mathchoice{\vphantom{\big|}}{}{}{}}

\definecolor{bananayellow}{rgb}{1.0, 0.88, 0.21}
\definecolor{bittersweet}{rgb}{1.0, 0.44, 0.37}

\newtheorem{definition}{Definition}
\newtheorem{theorem}{Theorem}
\newtheorem{proposition}{Proposition}
\newtheorem{lemma}{Lemma}

\newtheorem{example}{Example}
\newtheorem{remark}{Remark}

\def\BibTeX{{\rm B\kern-.05em{\sc i\kern-.025em b}\kern-.08em
    T\kern-.1667em\lower.7ex\hbox{E}\kern-.125emX}}
\begin{document}
\title{Comparison of Linear Systems Across Time Domains: Continuous-time vs. Discrete-time}
\author{Armin Pirastehzad \IEEEmembership{Member, IEEE} and Bart Besselink \IEEEmembership{Senior Member, IEEE}
\thanks{All authors are with the Bernoulli Institute for Mathematics, ComputerScience and Artificial Intelligence, University of Groningen, 9700 AK Groningen, The Netherlands (email: a.pirastehzad@rug.nl; b.besselink@rug.nl).}
}

\maketitle

\begin{abstract}
We develop a formal framework for the behavioral comparison of linear systems across different time domains. We accomplish this by introducing the notion of system interpolation, which determines whether the input-state trajectories of a continuous-time system can be realized as piecewise polynomial interpolations of the input-state trajectories of a discrete-time system. In this context, a piecewise polynomial interpolation of a discrete-time signal is characterized as a continuous-time function that coincides with the discrete-time signal at given sampling instants and can be realized as a polynomial of a prescribed degree over intervals between these instants. By representing piecewise polynomial functions as linear combinations of shifted Legendre polynomials, we characterize system interpolation as a subspace inclusion that is completely in terms of system parameters. This therefore allows for a computationally efficient comparison of the input-state behavior of a continuous-time system with that of a discrete-time one. We then exploit this characterization to discretize a given continuous-time system into a discrete-time one. Lastly, given a control specification, we exploit system interpolation to synthesize controllers that ensure satisfaction at each given sampling instant, while they measure the extent of (possible) violation over intervals between these instants. 
\end{abstract}
\begin{IEEEkeywords}
	System comparison, formal methods, temporal logic, simulation relation, hybrid systems. 
\end{IEEEkeywords}

\section{Introduction}\label{Sec_Introduction}
\IEEEPARstart{M}{odern} engineering applications often necessitate mobilization of  \emph{autonomous} systems, which are designed to operate in (uncertain) environments without continuous human intervention. Particularly, these systems are deployed in safety-critical technologies (\textit{e.g.,} self-driving vehicles \cite{78231091}), mission-critical utilities (\textit{e.g.,} power grids \cite{DOSSANTOSSERRA20251116251}), and medical-critical services (\textit{e.g.,} healthcare robots \cite{Tavakoli20241}). These applications often require autonomous systems to perform in accordance with intricate specifications, such as those expressed in terms of temporal logic formulae \cite{fisher2011introduction}. 

For continuous-time systems, such intricate specifications are usually enforced by controllers that are synthesized according to a \emph{hierarchical} procedure \cite{YIN2024100940}. In particular, many control schemes (see, \textit{e.g.,} \cite{4459804,MEYER2018437,8657716,8462741,8010291,POLA2019178,6082386,mazo2024contracttheorylayeredcontrol}) split synthesis into 1) \emph{discretization}, where a simplified discrete-time model of the continuous-time system is obtained, 2) \emph{planning}, through which a discrete control sequence is designed to enforce the discrete-time model to fulfill the specification, and 3) \emph{execution}, which involves the synthesis of a continuous-time control that enforces the state trajectories of the original system to remain sufficiently close to the \emph{planned} discrete-time trajectories.

Since these hierarchical schemes enforce the continuous-time state trajectories to remain sufficiently close to \emph{interpolations} of the planned discrete-time state trajectories, they usually guarantee the specification only at certain instances of time rather than at all times. In fact, such schemes are inconclusive on how controlled continuous-time trajectories behave with respect to the specification in between discrete time instances, \textit{i.e.,} it is not clear to what extent the controlled continuous-time trajectories satisfy/violate the specification between these instances. Measuring the extent of satisfaction/violation of a specification over intervals between instances necessitates a metric that rigorously compares the behavioral similarity of the controlled continuous-time and the planned discrete-time trajectories. This paper proposes such metric by developing a \emph{formal} framework for comparison of the input-state behavior of linear systems from continuous-time and discrete-time domains. This framework is then utilized for control synthesis to 1) ensure that the controlled continuous-time trajectories adhere to the specification at time instances that correspond to the planned discrete-time trajectories, and 2) measure the extent to which they may violate the specification in between these instances. 
\subsection{Related Works}
Behavioral similarity of dynamical systems has been considered from a variety of perspectives. Inspired by concurrency theory, the notion of \emph{(bi)simulation} \cite{lafferriere1997hybrid,pappas2000hierarchically,pappas2003bisimilar} utilizes the formalism of labeled transition systems to determine the ability of a system to reproduce the input-output behavior of another. The notion of (bi)simulation is further specialized to continuous-time systems in \cite{van2004equivalence}, where it is formulated as a modified disturbance decoupling problem and characterized algebraically. Generalization of (bi)simulation to control systems gave rise to the notion of \emph{alternating (bi)simulation} \cite{tabuada2008controller} which determines equivalence of the controlled input-output behavior of two systems. The notion of (bi)simulation and its alternating version are further utilized for specification verification and control synthesis in \cite{tabuada2009verification}.  

As (bi)simulation fails to compare systems with similar (but not identical) input-output behaviors, the notion of \emph{approximate (bi)simulation} was proposed in \cite{girard2005approximate,girard2007approximation,girard2007approximate,girard2008approximate} to quantify behavioral \emph{closeness}, and was later specialized to verification and synthesis in \cite{fiore2018approximate,pola2017approximate,pola2009symbolic,julius2009approximate}. However, because approximate (bi)simulation relies on the $\mathcal{L}_\infty$ signal norm to measure behavioral closeness, it is \emph{not} compatible with many analytic/synthetic tools in control theory, which often employ the $\mathcal{L}_2$ signal norm. To address this limitation, the notion of $(\gamma,\delta)$-similarity was proposed in \cite{pirastehzad2021,pirastehzad2023}, which makes use of the $\mathcal{L}_2$ signal norm to measure this closeness.

While all the notions mentioned above provide a formal methodology for system comparison, they share the limitation that they only compare systems within a \emph{single} time domain, \textit{i.e.,} they either compare continuous-time systems or discrete-time ones. Even in the case of hybrid systems, the notion of (bi)simulation (and its approximate variants) compares the continuous-time dynamics associated with each discrete mode in isolation, \textit{i.e.,} it compares the continuous-time dynamics for fixed discrete modes (see, \textit{e.g.,} \cite{girard2008approximate,tabuada2009verification}).

Many hierarchical schemes for temporal logic control (\textit{e.g.,} \cite{4459804,7798307,Liu2014,LIU20161,10189357,9696363}), however, involve an \emph{implicit} comparison of the external behavior of a continuous-time system with that of its discrete-time model, as they aim to provide guarantees on the continuous-time system according to strategies planned for its discrete-time model. However, such comparison highly depends on the specification, as the behavioral similarity of the continuous-time system and its discrete-time model is measured with respect to a given specification. In fact, none of these schemes provide a `context-independent' formal methodology for comparison of the behavioral similarity of a continuous-time system and its discrete-time model.  

A context-independent comparison of continuous-time systems and their discrete-time models is crucial, as many control problems can be efficiently addressed in discrete time, \textit{e.g.,} control for temporal logic specification \cite{7039363,7798307} and optimization-based approaches such as model predictive control (MPC) \cite{kouvaritakis2016model}. Despite this importance, only a few methodologies for such comparison have been proposed. Exploiting functional transformations to convert signals from one time domain to another, the notion of \emph{(alternating) $\mathcal{F}^{-1}$-simulation} \cite{mazo2024contracttheorylayeredcontrol} determines whether a system is able to produce a transformation of the input-output behavior of another system, \textit{i.e.,} it determines whether the behavior of one system can be transformed into that of another. The notion of (alternating) $\mathcal{F}^{-1}$-simulation can be formulated within the framework of (alternating) simulation. For finite-state systems, such formulation allows for the adoption of fixed point theorems (see, \textit{e.g.,} \cite[Theorem 5.6]{tabuada2009verification}) to obtain a computationally efficient characterization of (alternating) $\mathcal{F}^{-1}$-simulation. For infinite-state systems, however, characterization of (alternating) $\mathcal{F}^{-1}$-simulation is computationally challenging. In fact, comparison of such systems according to (alternating) $\mathcal{F}^{-1}$-simulation requires state abstraction, which becomes computationally intractable as the system dimension grows. 
 
The goal of this paper is therefore to develop a \emph{formal} framework for \emph{efficiently} comparing the input-state behavior of (infinite-state) linear systems across different time domains, namely the continuous and discrete one.
\subsection{Contributions}
First, we introduce the notion of \emph{system interpolation}, which determines whether the input-state trajectories of a continuous-time system can be realized as piecewise polynomial interpolations of the input-state trajectories of a discrete-time system. Specifically, for a given sampling time and a prescribed polynomial degree, we identify a continuous-time system as an \emph{interpolator} of a discrete-time one if, for any discrete input sequence to the discrete-time system, there exists a piecewise polynomial interpolation of this sequence subject to which the continuous-time system admits a state trajectory which is also a piecewise polynomial interpolation of the corresponding state trajectory of the discrete-time system. Here, we identify a piecewise polynomial interpolation of a discrete-time signal as a continuous-time function such that 1) at each sampling instant, the continuous-time function coincides with the discrete-time signal and 2) over each interval between two consecutive instants, the continuous-time function is represented by a polynomial of the prescribed degree. 

Second, we obtain an algebraic characterization of system interpolation that solely depends on parameters of the continuous-time and discrete-time systems. To this end, we demonstrate that system interpolation can be investigated by considering discrete-time input-state trajectories over a \emph{single} (discrete) time step, \textit{i.e.,} in order to inspect system interpolation, it suffices to check whether the continuous-time system can generate piecewise polynomial interpolations of discrete-time input-state trajectories over a single interval. We accordingly characterize these piecewise polynomial interpolations as linear combinations of the so-called \emph{shifted Legendre polynomials} (see, \textit{e.g.,} \cite{Canuto2006,wang2012legendre}). This then enables the characterization of system interpolation as a \emph{subspace inclusion} that completely depends on parameters of the continuous-time and discrete-time systems. Such characterization is computationally efficient since subspace inclusion can be formulated as a simple rank condition. 

Third, for a given input-state trajectory of the discrete-time system, we characterize the set of \emph{all} interpolating inputs that enforce the continuous-time system to admit input-state trajectories that are piecewise polynomial interpolations of this discrete-time trajectory. Particularly, we use the characterization of system interpolation to construct each input in this set.

Fourth, we use system interpolation to conduct discretization. Specifically, for a given sampling time and a prescribed polynomial degree, we \emph{discretize} a given continuous-time system into a discrete-time model in such a way that the former is an interpolator of the latter. We accomplish this by making use of the algebraic characterization of system interpolation to characterize the existence of such discrete-time model in terms of a linear matrix equation, whose solution gives the parameters of this model. 

Fifth, we utilize system interpolation for control synthesis. Specifically, for a given specification, we synthesize controllers on the basis of a discrete-time model of the continuous-time system (obtained via discretization using system interpolation) to 1) ensure that the controlled continuous-time trajectories adhere to the specification at \emph{each} sampling instant and 2) give a measure of the extent to which these trajectories may \emph{violate} the specification over intervals between consecutive sampling instances. In particular, by measuring (possible) specification violation in terms of the sampling time and the prescribed polynomial degree (of interpolation), we show how the extent of specification violation varies with respect to the sampling time and the polynomial degree. 

The rest of this paper is organized as follows. The problem statement is given in Section~\ref{Sec_ProblemStatement}. In    Section~\ref{Sec_LegendrePolynomials}, we briefly review the basic properties of the shifted Legendre polynomials, based upon which the main results of the paper are built. We then introduce and characterize the notion of system interpolation in Section~\ref{Sec_SystemInterpolation}. We accordingly use this characterization to conduct discretization in Section~\ref{Sec_Discretization}. Subsequently, we utilize system interpolation for control synthesis in Section~\ref{Sec_ControlSynthesis}. We further demonstrate our results in a numerical example in Section~\ref{Sec_Simulation} and finally conclude the paper by Section~\ref{Sec_Conclusion}.
\subsection{Notation} 
The mathematical notation adopted in this paper is largely standard. Nevertheless, for clarity, we briefly review the general conventions. 
\subsubsection*{Sets}
We respectively denote the set of real numbers and integers by $\mathbb{R}$ and $\mathbb{Z}$, while we use $\mathbb{R}_{>0}$ and $\mathbb{Z}_{>0}$ to respectively indicate the set of positive real numbers and integers. For real numbers $t_1\leq t_2$, we denote the closed real interval $\{t\in\mathbb{R} \vert t_1\leq t \leq t_2\}$ by $[t_1,t_2]$. For integers $k_1\leq k_2$, on the other hand, we use $\llbracket k_1,k_2 \rrbracket$ to denote the closed integer interval $\{k\in\mathbb{Z} \vert k_1\leq k \leq k_2\}$. We utilize $\mathbb{R}^n$ to denote the set of real vectors with $n$ components, and write $\mathbb{R}^{n\times m}$ for the set of real $n\times m$ matrices. Then, we denote the image of a matrix $M\in\mathbb{R}^{n\times m}$ by $\operatorname{Im} M$ and define it as $\operatorname{Im} M = \{y\in\mathbb{R}^n \vert \exists x\in\mathbb{R}^m: Mx = y\}$. 
\subsubsection*{Operators} We define the operator $\operatorname{col}(\cdot,\cdot)$ such that $\operatorname{col}(x_1,x_2) = (x_1^\top,x_2^\top)^\top$, for any vectors $x_1\in\mathbb{R}^{n_1}$ and $x_2\in\mathbb{R}^{n_2}$. The Euclidean norm of a vector $x\in\mathbb{R}^n$ is defined as $\lvert x \rvert = (x^\top x)^{\frac{1}{2}}$. Then, for a positive (semi-)definite matrix $M\in\mathbb{R}^{n\times m}$, we define the weighted Euclidean (semi-)norm as $\lvert x \rvert_M = (x^\top M x)^{\frac{1}{2}}$. We define the vectorization of a matrix $M =[M_1 \, M_2\,\cdots\,M_m]$, with columns $M_1,M_2,\cdots,M_m\in\mathbb{R}^n$, as $\operatorname{vec}(M) = \operatorname{col}(M_1,M_2,\cdots,M_m)$. We also utilize the symbol $\otimes$ to denote the Kronecker product. Indicating an identity matrix and a zero matrix of appropriate dimensions by $I$ and $0$, respectively, we define the Kronecker sum of square matrices $N\in\mathbb{R}^{n\times n}$ and $M\in\mathbb{R}^{m\times m}$ as $N\oplus M = N\otimes I + I\otimes M$. 
\subsubsection*{Function Spaces and Operations}
For real numbers $\tau_1\leq \tau_2$, we define the space of $n$-dimensional vector-valued functions that are square-integrable over the interval $[\tau_1,\tau_2]$ as $\mathcal{L}_2^n [\tau_1,\tau_2] = \{ u: [\tau_1,\tau_2]\rightarrow\mathbb{R}^n  \vert  \int_{\tau_1}^{\tau_2} |u(t)|^2\dt < \infty\}$, which is endowed with the norm $\norm{u}_{\mathcal{L}_2^n[\tau_1,\tau_2]} = (\int_{\tau_1}^{\tau_2} |u(t)|^2\dt)^{\frac{1}{2}}$. We also define the space of $n$-dimensional vector-valued functions that are bounded over $[\tau_1,\tau_2]$ as $\mathcal{L}_\infty^n[\tau_1,\tau_2] = \{u: [\tau_1,\tau_2]\rightarrow\mathbb{R}^n \vert \max_{t\in[\tau_1,\tau_2]} |u(t)|<\infty \}$, equipped with the norm $\norm{u}_{\mathcal{L}_\infty^n[\tau_1,\tau_2]} = \max_{t\in[\tau_1,\tau_2]} |u(t)|$. Given a set $I$, we define the restriction of a function $u : I \rightarrow \mathbb{R}^n$ to the subset $I'\subset I$ as the function $\restr{u}{I'} : I' \rightarrow \mathbb{R}^n$ such that $\forall t \in I' : \restr{u}{I'} (t) = u(t)$. Then, for a set $\mathcal{F}$ of functions $f:I \rightarrow \mathbb{R}^n$, we define the restriction of $\mathcal{F}$ to $I'$ as 
$\restr{\mathcal{F}}{I'} = \left\{\restr{f}{I'} \big\vert f\in\mathcal{F}\right\}$. 

\section{Problem Statement}\label{Sec_ProblemStatement}
For continuous-time systems, control synthesis for enforcing intricate specifications is usually split into (at least) the following consecutive tasks (see, \textit{e.g.,} \cite{MEYER2018437,8657716,8462741,8010291,POLA2019178,6082386,mazo2024contracttheorylayeredcontrol}).
\begin{enumerate}
	\item \emph{Discretization}: First, a simplified \emph{discrete-time} model of the continuous-time system is constructed, as demonstrated on the left side of Figure~\ref{Fig_Hierarchy}. Such a model is obtained either according to the occurrence of particular \emph{events}, such as evolution of state trajectories from one region of interest to another (see, \textit{e.g.,} \cite{4459804}), or through discretization of system dynamics with respect to a \emph{sampling} time (see, \textit{e.g.,} \cite{POLA2019178}). 
	\item \emph{Planning}: Then, a discrete control sequence is designed to enforce the discrete-time model to fulfill the specification (see, \textit{e.g.,} \cite{MEYER2018437,8657716}). In fact, such control enforces the discrete-time model to admit a state trajectory that adheres to a discretized version of the original (continuous) specification, see the lower part of Figure~\ref{Fig_Hierarchy}.
	\item \emph{Execution}: Lastly, a continuous-time control is synthesized that enforces the original system to admit a state trajectory that is sufficiently \emph{close} to an interpolation of the planned discrete-time trajectory, see the middle part of Figure~\ref{Fig_Hierarchy}. Such control then enforces the system to (approximately) satisfy the specification, \textit{e.g.,} it may enforce the system to admit a trajectory whose samples adhere to the specification (see, \textit{e.g.,} \cite{POLA2019178,mazo2024contracttheorylayeredcontrol
	}).
\end{enumerate}
The prevalence of such hierarchical schemes stems from the fact that control synthesis for enforcing complex specifications is \emph{easier} in the discrete-time than in the continuous-time. While these schemes guarantee that a discrete-time model of a continuous-time system satisfies a specification, they do not \emph{formally} guarantee that the system itself \emph{fully} satisfies the specification. This is a consequence of the fact that these schemes synthesize a controller that renders the continuous-time system to admit state trajectories that remain (sufficiently) close to interpolations of the state trajectories of its discrete-time model. In other words, despite enforcing the specification at certain instances (\textit{e.g.,} sampled times), such a controller does not necessarily ensure satisfaction at \emph{all} times. In fact, it is not clear how the controlled continuous-time system behaves with respect to the specification in \emph{between} these instances, \textit{i.e.,} there is no measure of the extent to which the specification is satisfied/violated over intervals between the instances. 

Motivated by this, we develop a formal framework for comparison of the input-state behavior of a continuous-time system with that of a discrete-time one. Specifically, we consider the continuous-time linear system 
\begin{equation}\label{ContinuousSystem}
	\bm{\Sigma}_c: \begin{aligned}
		\dot{x}_c(t) &= A_cx_c(t) + B_cu_c(t),
	\end{aligned} 
\end{equation}
with state $x_c\in\mathbb{R}^n$ and input $u_c \in \mathbb{R}^m$. We denote by $x_c(t;x_{0},u_c)$ the state solution, at time $t$, of \eqref{ContinuousSystem} for initial condition $x_c(0) = x_{0}$ and input $u_c$.

In correspondence to \eqref{ContinuousSystem}, we consider the discrete-time linear system
\begin{equation}\label{DiscreteSystem}
	\bm{\Sigma}_d: \begin{aligned}
		x_d(i+1) &= A_dx_d(i) + B_du_d(i),
	\end{aligned} 
\end{equation}
with state $x_d\in\mathbb{R}^n$ and $u_d\in\mathbb{R}^m$. We use notation similar to that of \eqref{ContinuousSystem} to denote the state sequence of \eqref{DiscreteSystem}. 

Our goal is therefore to conceive a method that compares the input-state behavior of \eqref{ContinuousSystem} with that of \eqref{DiscreteSystem}. We will accomplish this by performing (piecewise) polynomial interpolation on the basis of Legendre polynomials, which are studied in the next section. 
\begin{figure}
	\centering
	\begin{tikzpicture}[scale=0.3]
		\node  (ContSys) at (0,0) [rectangle, 
		minimum width=0.3cm, 
		minimum height=0.5cm,
		text centered, 
		draw=black, 
		line width = 0.5pt,
		fill=gray! 10!, align=center] {\hspace*{-3pt}\fontsize{7}{30} \selectfont Continuous-time \\[-3pt] \hspace*{-3pt}\fontsize{7}{30} \selectfont System};
		
		\node  (ContCont) at (10,0) [rectangle, 
		minimum width=0.3cm, 
		minimum height=0.5cm,
		text centered, 
		draw=black, 
		line width = 0.5pt,
		fill=gray! 10!, align=center] {\hspace*{-3pt}\fontsize{7}{30} \selectfont Continuous-time \\[-3pt] \hspace*{-3pt}\fontsize{7}{30} \selectfont Control};
		\draw[line width = 0.5pt,->,>=stealth] (3.2,0.5)--(6.8,0.5);
		\draw[line width = 0.5pt,->,>=stealth] (6.8,-0.5)--(3.2,-0.5);
		
		\node  (int) at (14.5,0) {\fontsize{8}{30} \selectfont $\models$};
		
		\node  (Spec) at (19,0) [rectangle, 
		minimum width=0.3cm, 
		minimum height=0.5cm,
		text centered, 
		draw=black, 
		line width = 0.5pt,
		fill=gray! 10!, align=center] {\hspace*{-3pt}\fontsize{7}{30} \selectfont Continuous \\[-3pt]\hspace*{-3pt}\fontsize{7}{30} \selectfont Specification};
		
		\draw[line width = 0.7pt,-,dotted] (-3.6,1.9) --(22,1.9) -- (22,-1.9) -- (-3.6,-1.9) -- (-3.6,1.9);
		
		\node  (DiscSys) at (0,-8) [rectangle, 
		minimum width=0.3cm, 
		minimum height=0.5cm,
		text centered, 
		draw=black, 
		line width = 0.5pt,
		fill=gray! 10!, align=center] {\hspace*{-3pt}\fontsize{7}{30} \selectfont Discrete-time \\[-3pt] \hspace*{-3pt}\fontsize{7}{30} \selectfont System};
		
		\node  (DiscCont) at (10,-8) [rectangle, 
		minimum width=0.3cm, 
		minimum height=0.5cm,
		text centered, 
		draw=black, 
		line width = 0.5pt,
		fill=gray! 10!, align=center] {\hspace*{-3pt}\fontsize{7}{30} \selectfont Discrete-time \\[-3pt] \hspace*{-3pt}\fontsize{7}{30} \selectfont Control };
		\draw[line width = 0.5pt,->,>=stealth] (2.65,-7.5)--(7.35,-7.5);
		\draw[line width = 0.5pt,->,>=stealth] (7.35,-8.5)--(2.65,-8.5);
		
		\node  (int) at (14.5,-8) {\fontsize{8}{30} \selectfont $\models$};
		
		\node  (Spec) at (19,-8) [rectangle, 
		minimum width=0.3cm, 
		minimum height=0.5cm,
		text centered, 
		draw=black, 
		line width = 0.5pt,
		fill=gray! 10!, align=center] {\hspace*{-3pt}\fontsize{7}{30} \selectfont Discrete \\[-3pt]\hspace*{-3pt}\fontsize{7}{30} \selectfont Specification};
		\draw[line width = 0.5pt,->,>=stealth, dashed] (ContSys)--(DiscSys);
		\node  [rotate = 90] (discretization) at (-0.5,-4.2) {\fontsize{5}{30} \selectfont Discretization};
		
		\draw[line width = 0.5pt,->,>=stealth, dashed] (DiscCont)--(ContCont);
		\node  [rotate = 90] (exec) at (9.5,-4.2) {\fontsize{5}{30} \selectfont Execution};
		
		\draw[line width = 0.7pt,-,dotted] (-3,-6.1) --(22,-6.1) -- (22,-9.9) -- (-3,-9.9) -- (-3,-6.1);
		\node (planing) at (9.5,-10.5) {\fontsize{6}{30} \selectfont Planing};
		
	\end{tikzpicture}
	\caption{Control synthesis for enforcing intricate specifications is usually split into 1) discretization of the continuous-time system into a simplified discrete-time model (see left side of the figure), 2) planning a trajectory (by designing a suitable discrete control sequence) for the discrete-time model that adheres to the specification (see the lower part of the figure), and 3) execution of a continuous-time control that utilizes an interpolation of the planned discrete-time trajectory to enforce the original system to (approximately) satisfy the specification (see the middle part of the figure).}\label{Fig_Hierarchy}
\end{figure}
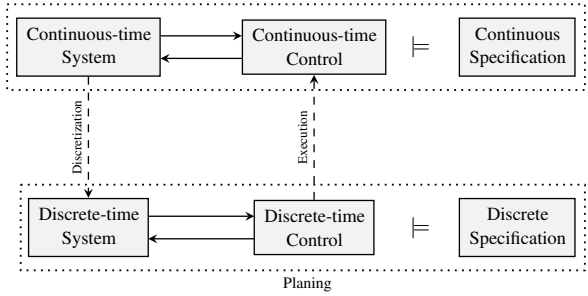

\section{Legendre Polynomials}\label{Sec_LegendrePolynomials}
We briefly review basic properties of Legendre polynomials, which are extensively utilized for polynomial interpolation as they form an \emph{orthogonal} basis for the space of polynomials (for further details on orthogonality, see, \textit{e.g.,} \cite{szeg1939orthogonal}). 

Given any integer $k\geq 0$, let $\mathfrak{L}_k$ be the \emph{standard} Legendre polynomial of degree $k$, defined as
\begin{equation}\label{StandardLegendre}
	\mathfrak{L}_k(t) = \frac{1}{2^k} \sum_{i = 0}^{\lfloor \frac{k}{2}\rfloor} (-1)^i \begin{pmatrix}
		k\\i
	\end{pmatrix} \begin{pmatrix}
		2k-2i\\k
	\end{pmatrix} t^{k-2i},
\end{equation}
where $\lfloor \frac{k}{2}\rfloor$ is the greatest integer less than or equal to $\frac{k}{2}$ (for an elaborate study of standard Legendre polynomials, see, \textit{e.g.,} \cite{guo1998spectral, Canuto2006, shen2011spectral}). Accordingly, for $\tau\in\mathbb{R}_{>0}$, we define the \emph{shifted} Legendre polynomial, of degree $k$, over the interval $[0,\tau]$ as
\begin{equation*}
	\mathfrak{L}_k^\tau (t) = \mathfrak{L}_k\left(\frac{2t}{\tau} -1\right). 
\end{equation*}
It then directly follows from the mutual orthogonality of standard Legendre polynomials over the interval $[-1,1]$ (see, \textit{e.g.,} \cite[Section 2.2.2]{Canuto2006}) that for all integers $k,l\geq 0$, 
\begin{equation}\label{MutualOrthogonality}
	\int_{0}^{\tau} \mathfrak{L}_k^\tau (t)\mathfrak{L}_l^\tau(t)\dt = \left\{\begin{aligned}
		&0&k\neq l,\\
		&\frac{\tau}{2k+1}&k=l,
	\end{aligned}\right.
\end{equation}
which indicates that shifted Legendre polynomials are \emph{linearly independent} over the interval $[0,\tau ]$.  

Now, for any integer $N\geq 0$, we let $\mathbb{P}_N^{n}$ denote the space of all $n$-dimensional vector-valued polynomials up to degree $N$. We thus conclude from the linear independence of shifted Legendre polynomials that 
\begin{equation}\label{PolynomialSpace}
	\restr{\mathbb{P}_N^n}{[0,\tau]} = \left\{\left.\sum_{k=0}^{N} \alpha_k\restr{\mathfrak{L}_k^\tau}{[0,\tau]} \right\vert \alpha_0,\alpha_1,\cdots,\alpha_N\in \mathbb{R}^n\right\},
\end{equation}
\textit{i.e.,} the restriction of any polynomial function (up to degree $N$) to the interval $[0,\tau]$ can be written as a linear combination of the restriction of shifted Legendre polynomials (up to degree $N$) to the interval $[0,\tau]$. We also note that $\restr{\mathbb{P}_N^{n}}{[0,\tau]}\subset\mathcal{L}_2^n[0,\tau]\cap \mathcal{L}_\infty^n[0,\tau]$. 

Subsequently, we construct a quadrature rule that enables exact integration of polynomial functions over the interval $[0,\tau]$. This requires a suitable choice of quadrature nodes and weights. For an integer $N\geq 1$, we choose the quadrature nodes $t_{N,0}<t_{N,1}<\ldots<t_{N,N}$ such that 
\begin{equation}\label{QuadratureNodes}
	\forall i\in\{0,1,\cdots,N\}: \quad \mathfrak{L}_N^\tau(t_{N,i}) + \mathfrak{L}_{N+1}^\tau(t_{N,i}) = 0,
\end{equation}
\textit{i.e.,} $t_{N,0}$, $t_{N,1}$, $\ldots$, $t_{N,N}$ are chosen as the zeros of $\mathfrak{L}_N^\tau + \mathfrak{L}_{N+1}^\tau$. We emphasize that such quadrature nodes always exists as $\mathfrak{L}_N^\tau + \mathfrak{L}_{N+1}^\tau$ is a \emph{separable} polynomial of degree $N+1$, \textit{i.e.,} it has $N+1$ distinct roots (for more details, see \cite[Section 2.2.3]{Canuto2006}). We further note that $t_{N,0} = 0$ and $t_{N,N}<\tau$ (see, \textit{e.g.,} \cite[Section 2]{wang2012legendre}).
  
Corresponding to these nodes, we define the quadrature weights  
\begin{subequations}\label{QuadratureWeight}
	\begin{align}
		w_{N,0} &= \frac{\tau}{(N+1)^2},
	\intertext{and}
	w_{N,i} &= \frac{1}{(N+1)^2}\frac{\tau-t_{N,i}}{|\mathfrak{L}_N^\tau(t_{N,i})|^2},
	\end{align}
\end{subequations}
for $i=1,2,\ldots,N$. It now follows from Gauss-Radau quadrature (see, \textit{e.g.,} \cite[Section 2.2.3]{Canuto2006}) that 
\begin{equation}\label{IntegrationComputation}
	\forall P\in\mathbb{P}_{2N}^{n}: \quad \int_{0}^{\tau}P(t)\dt = \sum_{i=0}^{N} P(t_{N,i})w_{N,i}.
\end{equation}

We note that \eqref{IntegrationComputation} enables computation of the $\mathcal{L}_2^n[0,\tau]$ norm of polynomial functions in terms of their values at the quadrature nodes. More specifically, for any function $P\in\restr{\mathbb{P}_N^{n}}{[0,\tau]}$, we employ \eqref{IntegrationComputation} to obtain 
\begin{equation}\label{NormComputation}
	\norm{P}_{\mathcal{L}_2^n[0,\tau]} = \left(\sum_{i=0}^{N}|P(t_{N,i})|^2w_{N,i}\right)^{\frac{1}{2}}.
\end{equation}

We further note that \eqref{IntegrationComputation} facilitates the representation of a function $P\in\restr{\mathbb{P}_N^{n}}{[0,\tau]}$ in terms of its values at quadrature nodes. Given $P\in\restr{\mathbb{P}_N^{n}}{[0,\tau]}$, it follows from \eqref{PolynomialSpace} that there exist vector-valued coefficients $\alpha_0,\alpha_1,\cdots,\alpha_N\in\mathbb{R}^n$ such that 
\begin{equation}\label{TechnicalExpansion}
	\forall t\in[0,\tau]: \quad P(t) = \sum_{k=0}^{N} \alpha_k\restr{\mathfrak{L}_k^\tau}{[0,\tau]}(t).
\end{equation}
Bearing in mind that $P\restr{\mathfrak{L}_k^\tau}{[0,\tau]} \in \restr{\mathbb{P}_{2N}^n}{[0,\tau]}$ for all $k = 0,1,\cdots,N$, we immediately conclude from \eqref{MutualOrthogonality} that 
\begin{equation*}
	\alpha_k = \frac{2k+1}{\tau} \int_{0}^{\tau}P(t)\mathfrak{L}_k^\tau(t)\dt, 
\end{equation*}
which, as a consequence of \eqref{IntegrationComputation}, can be written as 
\begin{equation}\label{TechnicalCoefficient}
	\alpha_k = \frac{2k+1}{\tau} \sum_{i = 0}^{N}P(t_{N,i})\mathfrak{L}_k^\tau(t_{N,i})w_{N,i}.
\end{equation} 
Thus, after defining the function $\phi_i: [0,\tau]\rightarrow\mathbb{R}$ as
\begin{equation}\label{PhiFunctionIntermediate}
	\phi_i(t) = \sum_{k=0}^{N}\frac{2k+1}{\tau}\big(\mathfrak{L}_k^\tau(t_{N,i})w_{N,i}\big)\mathfrak{L}_k^\tau(t),
\end{equation}
we substitute \eqref{TechnicalCoefficient} into \eqref{TechnicalExpansion} to obtain 
\begin{equation}\label{SpectralRepresentation}
	\forall t\in[0,\tau]: \quad P(t) = \sum_{i=0}^{N} \phi_i(t)P(t_{N,i}),
\end{equation}
which suggests that $P$ can be written in terms of its values at the quadrature nodes. More importantly, we may exploit \eqref{SpectralRepresentation} to obtain a \emph{closed-form} expression of \eqref{PhiFunctionIntermediate}. To accomplish this, we consider the \emph{scalar} function 
\begin{equation*}
	L_i(t) = \prod_{j=0,j\neq i}^{N}(t-t_{N,j}),
\end{equation*}
which is a polynomial, of degree $N$, whose roots include all quadrature nodes except for $t_{N,i}$. Bearing this in mind, we employ \eqref{SpectralRepresentation} to obtain $\phi_i(t) = L_i(t)L_i^{-1}(t_{N,i})$, from which we immediately conclude that
\begin{equation}\label{PhiFunction}
	\phi_i(t) = \prod_{j=0,j\neq i}^{N}\frac{t-t_{N,j}}{t_{N,i}-t_{N,j}}.
\end{equation}
We note from the expression \eqref{PhiFunction} that the function $\phi_i$ is in fact a \emph{Lagrange interpolating polynomial} for the quadrature nodes (see, \textit{e.g.,} \cite[Section 3.1]{faires2012numerical}). We further highlight that
\begin{equation}\label{Dirac}
	\phi_i(t_{N,j}) = \left\{\begin{aligned}
		&0&j&\neq i,\\
		&1&j&=i,
	\end{aligned} \right.
\end{equation}
for all $i=0,1,\ldots,N$. Additionally, given the special structure of \eqref{PhiFunction}, we may employ the product rule to obtain
\begin{equation}\label{DerPhiFunction}
	\dot{\phi}_i(t) = \sum_{l=0, l\neq i}^{N}\left(\frac{1}{t_{N,i}-t_{N,l}}\prod_{j=0, j\neq i,l}^{N} \frac{t-t_{N,j}}{t_{N,i}-t_{N,j}} \right).
\end{equation}

To recapitulate, we emphasize that any $P\in\restr{\mathbb{P}_N^n}{[0,\tau]}$ admits the representation \eqref{SpectralRepresentation}, with $\phi_i$ given by \eqref{PhiFunction}. We note that the function $\phi_i$ is \emph{independent} of $P$ as it is completely determined by the quadrature nodes. In the next section, we will extensively use the representation \eqref{SpectralRepresentation}, together with formulae \eqref{PhiFunction} and \eqref{DerPhiFunction}, to compare continuous-time signals with discrete-time ones. 
\section{System Interpolation}\label{Sec_SystemInterpolation}
We will use polynomial interpolation on the basis of shifted Legendre polynomials to propose a technique for comparison of the input-state behaviors of systems \eqref{ContinuousSystem} and \eqref{DiscreteSystem}. For this purpose, we first interpolate discrete-time signals into continuous-time \emph{piecewise polynomial} ones according to the following definition. 

\begin{definition}\label{DefSignalInterpolation}
	For $\ell,p\in\mathbb{Z}_{>0}$, let a discrete-time signal $s_d: \llbracket0,\ell \rrbracket \rightarrow \mathbb{R}^p$ be given. For a sampling time $\tau\in\mathbb{R}_{>0}$ and an integer $N\geq 1$, a continuous-time signal $s_c: [0,\ell \tau]\rightarrow\mathbb{R}^p$ is said to be an $N$-th order interpolation of $s_d$ with respect to the sampling time $\tau$ if there exist polynomial functions $P_0,P_1,\cdots,P_{\ell-1}\in\restr{\mathbb{P}_N^{p}}{[0,\tau]}$ such that for all $i \in \llbracket0,\ell-1\rrbracket$, we have that 
	\begin{subequations}\label{InterpolationCondition}
		\begin{equation}\label{InstanceEquivalence}
			\begin{aligned}
				s_c(i\tau) &= s_d(i),\\
				s_c\big((i+1)\tau\big) &= s_d(i+1)
			\end{aligned}
		\end{equation}
		\text{and}
		\begin{equation}\label{PolynomialRestriction}
			\forall t\in[0,\tau]: \quad s_c(i\tau+t) = P_i(t).
		\end{equation}
	\end{subequations}
\end{definition}
\vspace*{2mm}

According to Definition~\ref{DefSignalInterpolation}, the continuous-time signal $s_c: [0,\ell \tau]\rightarrow\mathbb{R}^p$ is an $N$-th order interpolation of the discrete-time signal $s_d: \llbracket0,\ell \rrbracket \rightarrow \mathbb{R}^p$ with respect to the sampling time $\tau$ if its value at any time $i\tau$ equals the value of $s_d$ at instance $i$, while its restriction to the interval $[i\tau,(i+1)\tau]$ can be described by a polynomial in $\restr{\mathbb{P}_N^{p}}{[0,\tau]}$. 

We note that for $N\geq2$, an $N$-th order interpolation of a discrete-time signal $s_d$ with respect to sampling time $\tau$ is \emph{not} unique. Thus, we denote by $\mathbb{I}_{N}^\tau(s_d)$ the set of all $N$-th order interpolations of a given discrete-time signal $s_d: \llbracket0,\ell \rrbracket \rightarrow \mathbb{R}^p$ with respect to the sampling time $\tau$. 


Having defined piecewise polynomial interpolation of discrete-time signals, we now propose the following notion of comparison that determines whether a continuous-time system is able to generate an \emph{interpolation} of the input-state behavior of a discrete-time system. 

\begin{definition}\label{DefInterpolatingSystem}
	For a sampling time $\tau\in\mathbb{R}_{>0}$ and an integer $N\geq 1$, a continuous-time system $\bm{\Sigma}_c$ is said to be an $N$-th order interpolator of a discrete-time system $\bm{\Sigma}_d$ with respect to the sampling time $\tau$ if for any integer $\ell\in\mathbb{Z}_{> 0}$, any initial condition $x_0\in\mathbb{R}^n$, and any discrete-time input $u_d:\llbracket 0,\ell \rrbracket \rightarrow \mathbb{R}^m$, there exists a continuous-time input $u_c \in \mathbb{I}_N^\tau (u_d)$ such that 
	\begin{equation}\label{InterpolatingCondition}
		\restr{x_c (\cdot; x_0,u_c)}{[0,\ell\tau]} \in \mathbb{I}_N^\tau \Big(\restr{x_d(\cdot;x_0,u_d)}{\llbracket 0,\ell\rrbracket}\Big).
	\end{equation}
\end{definition}
\vspace*{2mm}

It follows from Definition~\ref{DefInterpolatingSystem} that when $\bm{\Sigma}_c$ is an $N$-th order interpolator of $\bm{\Sigma}_d$ with respect to the sampling time $\tau$, for any input-state trajectory in $\bm{\Sigma}_d$, there exists an input-state trajectory in $\bm{\Sigma}_c$ that is its $N$-th order interpolation with respect to the sampling time $\tau$. In other words, $\bm{\Sigma}_c$ is able to generate an $N$-th order interpolation of any input-state trajectory of $\bm{\Sigma}_d$ with respect to the sampling time $\tau$. 

We now propose the following result which allows for the investigation of system interpolation on the basis of a \emph{single} (discrete) time step, \textit{i.e.,} in order to inspect system interpolation, it is sufficient to consider a single interval $[0,\tau]$. This will later facilitate an algebraic characterization of system interpolation. 

\begin{proposition}\label{PropositionSingleStepCheck}
	For a sampling time $\tau\in\mathbb{R}_{>0}$ and an integer $N\geq 1$, $\bm{\Sigma}_c$ is an $N$-th order interpolator of $\bm{\Sigma}_d$ with respect to the sampling time $\tau$ if and only if for any $x_0\in\mathbb{R}^n$ and any $u_d: \llbracket0,1\rrbracket\rightarrow\mathbb{R}^m$, there exists $u_c \in \mathbb{I}_N^\tau(u_d)$ such that 
	\begin{equation}\label{InterpolatingConditionSimple}
		\restr{x_c (\cdot; x_0,u_c)}{[0,\tau]} \in \mathbb{I}_N^\tau \Big(\restr{x_d(\cdot;x_0,u_d)}{\llbracket 0,1\rrbracket}\Big).
	\end{equation}
\end{proposition}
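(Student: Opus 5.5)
The plan is to prove the two directions separately. Necessity is immediate: take $\ell=1$ in Definition~\ref{DefInterpolatingSystem}. For any $x_0$ and any $u_d:\llbracket 0,1\rrbracket\rightarrow\mathbb{R}^m$ this gives $u_c\in\mathbb{I}_N^\tau(u_d)$ with $\restr{x_c(\cdot;x_0,u_c)}{[0,\tau]}\in\mathbb{I}_N^\tau\big(\restr{x_d(\cdot;x_0,u_d)}{\llbracket 0,1\rrbracket}\big)$, which is exactly \eqref{InterpolatingConditionSimple}.

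For sufficiency, fix $\ell\geq 1$, $x_0$ and $u_d:\llbracket 0,\ell\rrbracket\rightarrow\mathbb{R}^m$. The idea is to build $u_c$ interval by interval, using time invariance and concatenation. Write $x_d(i)=x_d(i;x_0,u_d)$. For each $i\in\llbracket 0,\ell-1\rrbracket$, apply the single-step hypothesis with initial state $x_d(i)$ and the two-sample input $v_i:\llbracket 0,1\rrbracket\rightarrow\mathbb{R}^m$, where $v_i(0)=u_d(i)$ and $v_i(1)=u_d(i+1)$. This yields $w_i\in\mathbb{I}_N^\tau(v_i)$ such that $\restr{x_c(\cdot;x_d(i),w_i)}{[0,\tau]}$ is a polynomial $Q_i\in\restr{\mathbb{P}_N^n}{[0,\tau]}$. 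By construction, $Q_i(0)=x_d(i)$ and $Q_i(\tau)=x_d(i;\ldots)$ evaluated one step ahead, namely $A_dx_d(i)+B_du_d(i)=x_d(i+1)$. The only requirement is that the discrete successor depends only on $v_i(0)$, which it does by \eqref{DiscreteSystem}. Also $w_i=P_i$ on $[0,\tau]$, with $P_i(0)=u_d(i)$ and $P_i(\tau)=u_d(i+1)$.

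Next, define $u_c(i\tau+t)=P_i(t)$ for $t\in[0,\tau]$ and $i\in\llbracket 0,\ell-1\rrbracket$. This is well defined at the junctions, since $P_{i-1}(\tau)=u_d(i)=P_i(0)$. Hence $u_c\in\mathbb{I}_N^\tau(u_d)$ directly from Definition~\ref{DefSignalInterpolation}. Beyond $\ell\tau$ the input can be extended arbitrarily, e.g.\ by zero, because only the restriction to $[0,\ell\tau]$ matters.

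It remains to show by induction on $i$ that $x_c(i\tau+t;x_0,u_c)=Q_i(t)$ for $t\in[0,\tau]$. The base case is $x_c(0)=x_0=x_d(0)$. For the inductive step, assume $x_c(i\tau;x_0,u_c)=x_d(i)$. By time invariance of \eqref{ContinuousSystem} and uniqueness of solutions (variation of constants), $x_c(i\tau+t;x_0,u_c)=x_c\big(t;x_d(i),u_c(i\tau+\cdot)\big)=x_c(t;x_d(i),w_i)=Q_i(t)$ on $[0,\tau]$. Evaluating at $t=\tau$ gives $x_c((i+1)\tau)=x_d(i+1)$, which closes the induction. Consequently $\restr{x_c(\cdot;x_0,u_c)}{[0,\ell\tau]}$ satisfies \eqref{InstanceEquivalence} and \eqref{PolynomialRestriction} with polynomials $Q_i$, which establishes \eqref{InterpolatingCondition}.

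The main delicate point is the bookkeeping at the junctions. First, the one-step interpolating input $w_i$ must match the prescribed endpoint $u_d(i+1)$ so that the pieces glue together. This is why we pass the full pair $(u_d(i),u_d(i+1))$ rather than an arbitrary second sample. Second, the hypothesis must be applied with initial state $x_d(i)$ rather than $x_0$, which is allowed because it holds for every initial condition. The remaining steps, time invariance and concatenation, are routine.
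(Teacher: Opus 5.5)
Your proof is correct and follows the paper's argument essentially step for step: necessity by taking $\ell=1$ in Definition~\ref{DefInterpolatingSystem}, and sufficiency by applying the single-step hypothesis from the initial state $x_d(i;x_0,u_d)$ with the two-sample input $(u_d(i),u_d(i+1))$, concatenating the resulting inputs, and closing by induction using time invariance and uniqueness of solutions. Your explicit check that the concatenated input is well defined at the junctions $t=i\tau$, which uses $P_{i-1}(\tau)=u_d(i)=P_i(0)$, is a small detail that the paper leaves implicit.
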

\vspace*{2mm}
\begin{proof}
	To show necessity, let $\ell = 1$, $x_0\in\mathbb{R}^n$, and $u_d:\llbracket 0,\ell \rrbracket \rightarrow \mathbb{R}^m$. As $\bm{\Sigma}_c$ is an $N$-th order interpolator of $\bm{\Sigma}_d$ with respect to the sampling time $\tau$, it follows from Definition~\ref{DefInterpolatingSystem} that there exists $u_c \in \mathbb{I}_N^\tau(u_d)$ such that \eqref{InterpolatingConditionSimple} holds. 
	
	To show sufficiency, let $\ell\in\mathbb{Z}_{>0}$, $x_0\in\mathbb{R}^n$, and $u_d: \llbracket0,\ell\rrbracket\rightarrow\mathbb{R}^m$. For any integer $0\leq i\leq \ell-1$, we define $u_{d}^i: \llbracket 0,1\rrbracket\rightarrow\mathbb{R}^m$ such that 
	\begin{equation}\label{PropTechnical1}
		u_{d}^i(0) = u_d(i), \quad u_d^i(1) = u_d(i+1).
	\end{equation}
	We then note from time-invariance of $\bm{\Sigma}_d$ that 
	\begin{equation}\label{DiscreteInvariance}
		\restr{x_d\big(\cdot; x_d(i; x_0,u_d),u_d^i \big)}{\llbracket0,1\rrbracket} = \restr{x_d(\cdot;x_0,u_d)}{\llbracket i,i+1\rrbracket}.
	\end{equation}
	It then follows from the proposition statement that there exists $u_c^i\in\mathbb{I}_N^\tau(u_d^i)$ such that 
	\begin{equation*}
	{\textstyle	\restr{x_c \big(\cdot; x_d(i; x_0,u_d),u_c^i\big)}{[0,\tau]}\!\in\!\mathbb{I}_N^\tau \Big(\! \restr{x_d\big(\cdot;x_d(i; x_0,u_d),u_d^i\big)}{\llbracket0,1\rrbracket}\!\Big)}.
	\end{equation*}
	Then, bearing \eqref{DiscreteInvariance} in mind, we conclude from Definition~\ref{DefSignalInterpolation} that there exists $P_{i}\in\restr{\mathbb{P}_N^n}{[0,\tau]}$ such that
	\begin{subequations}\label{PropTechnical2}
		\begin{equation}\label{PropTechnical21}
			\begin{aligned}
				x_c\big(0; x_d(i; x_0,u_d),u_c^i\big)&= x_d(i;x_0,u_d),\\
				x_c\big(\tau; x_d(i; x_0,u_d),u_c^i\big) &= x_d(i+1;x_0,u_d),\\
			\end{aligned}
		\end{equation}
		\text{and}
		\begin{equation}\label{PropTechnical31}
			\begin{aligned}
				\forall t\in[0,\tau]:x_c\big(t; x_d(i; x_0,u_d),u_c^i\big) &= P_{i}(t).
			\end{aligned}
		\end{equation}
	\end{subequations}
	We now construct the continuous-time input $u_c: [0,\ell \tau]\rightarrow\mathbb{R}^m$ such that for all integers $0\leq i \leq \ell-1$,
	\begin{equation}\label{PropTechnical4}
		\forall t\in[0,\tau]: \quad u_c(i\tau+t) = u_c^i(t).
	\end{equation}
	Recalling that $u_c^i\in\mathbb{I}_N^\tau(u_d^i)$, we utilize \eqref{PropTechnical4}, together with \eqref{PropTechnical1}, to conclude that $u_c \in\mathbb{I}_N^\tau(u_d)$. We now show that \eqref{InterpolatingCondition} holds. For this purpose, we utilize induction to show that  for all $i\in\llbracket0,\ell-1 \rrbracket$, 
	\begin{subequations}\label{PropTechnical5}
		\begin{equation}\label{PropTechnical51}
			\begin{aligned}
				x_c (i\tau; x_0,u_c) &= x_d(i;x_0,u_d),\\
				x_c \big((i+1)\tau; x_0,u_c\big) &= x_d(i+1;x_0,u_d),
			\end{aligned}
		\end{equation}
		\text{and}
		\begin{equation}\label{PropTechnical52}
			\forall t\in[0,\tau]: \quad x_c (i\tau+t; x_0,u_c) = P_i(t).
		\end{equation}
	\end{subequations}
	Bearing \eqref{PropTechnical4} in mind, we note from the uniqueness of the solution of \eqref{ContinuousSystem} that 
	\begin{equation*}
	\begin{aligned}
			\forall t\in [0,\tau]: \quad x_c (t; x_0,u_c) &=  x_c \big(t; x_0,u_{c}^0\big)\\
			&= x_c \big(t; x_d(0; x_0,u_d),u_{c}^0\big),
	\end{aligned}
	\end{equation*}
	which, by \eqref{PropTechnical2}, implies that \eqref{PropTechnical5} holds for $i=0$. We now suppose that \eqref{PropTechnical5} holds for $i=k-1$, where $0<k\leq \ell-1$. It then follows from time-invariance of $\bm{\Sigma}_c$ that for all $t\in[0,\tau]$,
	\begin{equation*}
		x_c (k\tau+t; x_0,u_c) = x_c \big(t; x_c(k\tau; x_0,u_c),u_c^k\big),
	\end{equation*} 
	which, as a result of \eqref{PropTechnical51}, can be written as 
	\begin{equation*}
		\begin{aligned}
			x_c (k\tau+t; x_0,u_c) = x_c \big(t; x_d(k; x_0,u_d),u_c^k\big).
		\end{aligned}
	\end{equation*}
	It then follows from \eqref{PropTechnical2} that 
	\begin{equation*}
		\begin{aligned}
			x_c (k\tau; x_0,u_c) &= x_d(k;x_0,u_d),\\
			x_c\big((k+1)\tau; x_0,u_c\big) &= x_d(k+1;x_0,u_d),
		\end{aligned}
	\end{equation*}
	and 
	\begin{equation*}
		\forall t\in[0,\tau]: \quad x_c (k\tau+t; x_0,u_c) = P_k(t),
	\end{equation*}
	indicating that \eqref{PropTechnical5} holds for all $i\in\llbracket0,\ell-1 \rrbracket$. This, by Definition~\ref{DefSignalInterpolation}, implies \eqref{InterpolatingCondition}. 
\end{proof}

We now utilize Proposition~\ref{PropositionSingleStepCheck} to obtain an algebraic characterization of system interpolation. To accomplish this, for a sampling time $\tau\in\mathbb{R}_{>0}$, an integer $N\geq 1$, an initial condition $x_0\in\mathbb{R}^n$, and a discrete-time input $u_d:\llbracket0,1\rrbracket\rightarrow\mathbb{R}^m$, we suppose there exists a continuous-time input $u_c\in\mathbb{I}_N^\tau(u_d)$ such that \eqref{InterpolatingConditionSimple} holds. 

Considering that $u_c\in\mathbb{I}_N^\tau(u_d)$, we conclude from Definition~\ref{DefSignalInterpolation} that
\begin{equation}\label{UBoundaryCondition}
	u_c(0) = u_d(0), \quad u_c(\tau) = u_d(1),
\end{equation}
and that $u_c \in \restr{\mathbb{P}_N^{n}}{[0,\tau]}$. Accordingly, recalling that $t_{N,0} = 0$, we use \eqref{SpectralRepresentation} and \eqref{UBoundaryCondition} to write
\begin{equation}\label{USpectral}
	\forall t\in[0,\tau]: \quad u_c(t) = \phi_0(t)u_d(0) + \sum_{i=1}^{N}\phi_i(t)u_c(t_{N,i}),
\end{equation}
where, for $i=0,1,\ldots,N$, the function $\phi_i$ is defined as in \eqref{PhiFunction} and is therefore independent from $u_d$. 

We accordingly define the vector 
\begin{equation}\label{Phi}
	\Phi = \begin{bmatrix}
		\phi_1(\tau)\\\phi_2(\tau)\\\vdots\\\phi_N(\tau)
	\end{bmatrix}
\end{equation}
and the matrix 
\begin{equation}\label{U}
	U=\begin{bmatrix}
		u_c(t_{N,1})&u_c(t_{N,2})&\cdots&u_c(t_{N,N})
	\end{bmatrix},
\end{equation}
which, together with \eqref{USpectral}, allows us to write \eqref{UBoundaryCondition} as
\begin{equation}\label{UFinalCondition}
	U\Phi = u_d(1) - \phi_0(\tau)u_d(0).
\end{equation}

We now consider $x_c(\cdot;x_0,u_c)$ and utilize \eqref{InterpolatingConditionSimple} to conclude from Definition~\ref{DefSignalInterpolation} that 
\begin{equation}\label{XBoundaryCondition}
	\begin{aligned}
		x_c(0;x_0,u_c) &= x_0, 
		&x_c(\tau;x_0,u_c) &= x_d(1;x_0,u_d),  
	\end{aligned}
\end{equation}
and that $\restr{x_c(\cdot;x_0,u_c)}{[0,\tau]} \in \restr{\mathbb{P}_N^{n}}{[0,\tau]}$. This, using again \eqref{SpectralRepresentation}, implies that for all $t\in[0,\tau]$,
\begin{equation}\label{XSpectral}
	\begin{aligned}
		x_c(t;x_0,u_c) = \phi_0(t)x_0+ \sum_{i=1}^{N} \phi_i(t)x_c(t_{N,i};x_0,u_c).
	\end{aligned}
\end{equation}
Moreover, for $i=1,2,\ldots,N$, we define 
\begin{equation}\label{Xi}
	X_i = x_c(t_{N,i};x_0,u_c)
\end{equation}
and construct the matrix 
\begin{equation}\label{XMatrix}
	\begin{aligned}
		X &= \begin{bmatrix}
			X_1&X_2&\cdots&X_N
		\end{bmatrix},
	\end{aligned}
\end{equation}
accordingly. We then exploit \eqref{XSpectral} to write \eqref{XBoundaryCondition} as
\begin{equation}\label{XFinalCondition}
	X\Phi = x_d(1;x_0,u_d) - \phi_0(\tau)x_0,
\end{equation}
where $\Phi$ is defined as in \eqref{Phi}. Furthermore, after differentiating \eqref{XSpectral} with respect to $t$, we substitute $t = t_{N,j}$ and conclude from \eqref{ContinuousSystem} that 
\begin{align}
		\sum_{i=1}^{N} \dot{\phi}_i(t_{N,j})x_c(t_{N,i};x_0,u_c)&= A_c x_c(t_{N,j};x_0,u_c)\label{DerivativeEquation}\\
		&\quad+ B_cu_c(t_{N,j})-\dot{\phi}_0(t_{N,j})x_0,\nonumber
\end{align}
for all $j=0,1,\cdots,N$. We recall that the derivative $\dot{\phi}_i(t)$ is computed by \eqref{DerPhiFunction}. Subsequently, after defining the vector
\begin{equation}\label{PsiVector}
	\psi_j = \begin{bmatrix}
		\dot{\phi}_1(t_{N,j})\\\dot{\phi}_2(t_{N,j})\\\vdots\\\dot{\phi}_{N}(t_{N,j})\\
	\end{bmatrix},
\end{equation}
for $j=0,1,\cdots,N$, we construct the matrix 
\begin{equation}\label{PhiMatrix}
	\Psi = \begin{bmatrix}
		\psi_1&\psi_2&\cdots&\psi_{N}
	\end{bmatrix}.
\end{equation}
We write \eqref{DerivativeEquation}, evaluated at $j=0$, as 
\begin{subequations}\label{XDerivativeCondition}
	\begin{equation}
		X\psi_0 = \big(A_c-\dot{\phi}_0(0)I_n\big)x_0 + B_cu_d(0),
	\end{equation}
	\text{whereas we collect \eqref{DerivativeEquation}, evaluated at $j=1,2,\ldots,N$, into
	}
	\begin{equation}
		X\Psi = A_cX+B_cU - x_0\sigma,
	\end{equation}
\end{subequations}
where
\begin{equation}\label{SigmaVectors}
	\sigma = \begin{bmatrix}
		\dot{\phi}_0(t_{N,1})&\cdots&\dot{\phi}_0(t_{N,N})
	\end{bmatrix},
\end{equation}
Finally, bearing in mind that 
\begin{equation*}
	x_d(1;x_0,u_d) = A_d x_0 + B_du_d(0),
\end{equation*}
we collect \eqref{UFinalCondition}, \eqref{XFinalCondition}, and \eqref{XDerivativeCondition} to obtain 
\begin{equation}\label{MidEquations}
	\begin{aligned}
		X\psi_0 &= \big(A_c-\dot{\phi}_0(0)I_n\big)x_0 + B_cu_d(0),\\
		X\Psi - A_cX - B_cU&= - x_0\sigma,\\
		X\Phi &= \big(A_d - \phi_0(\tau)I_n\big)x_0 + B_du_d(0),\\
		U\Phi &= u_d(1) - \phi_0(\tau)u_d(0).
	\end{aligned}
\end{equation}
The formulation \eqref{MidEquations} facilitates the following algebraic characterization of system interpolation, which is solely in terms of the parameters of $\bm{\Sigma}_c$ and $\bm{\Sigma}_d$. \begin{theorem}\label{ThCharacterization}
	For a sampling time $\tau\in\mathbb{R}_{>0}$ and an integer $N\geq 1$, $\bm{\Sigma}_c$ is an $N$-th order interpolator of $\bm{\Sigma}_d$ with respect to the sampling time $\tau$ if and only if 
	\begin{equation}\label{Finalcharacterization}
		\begin{aligned}
			\operatorname{Im}\!\!\begingroup 
			\setlength\arraycolsep{1.25pt}\begin{bmatrix}
				A_c-\dot{\phi}_0(0)I&B_c&0\\
				-\sigma^\top\otimes I&0&0\\
				A_d - \phi_0(\tau)I&B_d&0\\
				0&\!\!\!-\phi_0(\tau)I&I
			\end{bmatrix}\endgroup \!\subset
			\operatorname{Im}\!\!\begingroup 
			\setlength\arraycolsep{2pt}\begin{bmatrix}
				\psi_0^\top \otimes I & 0\\
				\Psi^\top\oplus(-A_c)&-I \otimes B_c\\
				\Phi^\top \otimes I&0\\
				0 &\Phi^\top \otimes I
			\end{bmatrix} 
			\endgroup\!,
		\end{aligned}
	\end{equation}
	where $\phi_0$, $\Phi$, $\psi_0$, $\Psi$, and $\sigma$  are respectively given by \eqref{PhiFunction}, \eqref{Phi}, \eqref{PsiVector}, \eqref{PhiMatrix}, and \eqref{SigmaVectors}.  
\end{theorem}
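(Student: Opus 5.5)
By Proposition~\ref{PropositionSingleStepCheck}, it suffices to prove the single-step version: for every $x_0\in\mathbb{R}^n$ and every pair $(u_d(0),u_d(1))\in\mathbb{R}^m\times\mathbb{R}^m$, there exists $u_c\in\mathbb{I}_N^\tau(u_d)$ satisfying \eqref{InterpolatingConditionSimple}. The plan is to show that this is equivalent to solvability of the linear system \eqref{MidEquations} in the unknowns $(X,U)$. I would then vectorize \eqref{MidEquations}. Stacking the unknowns as $\operatorname{col}(\operatorname{vec}X,\operatorname{vec}U)$ and the data as $\operatorname{col}(x_0,u_d(0),u_d(1))$, the identity $\operatorname{vec}(MYK)=(K^\top\otimes M)\operatorname{vec}Y$ turns the four block equations into the rows of the two matrices in \eqref{Finalcharacterization}. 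For example, $\operatorname{vec}(X\Psi-A_cX)=(\Psi^\top\oplus(-A_c))\operatorname{vec}X$, $\operatorname{vec}(B_cU)=(I\otimes B_c)\operatorname{vec}U$, and $\operatorname{vec}(x_0\sigma)=(\sigma^\top\otimes I)x_0$. Solvability for every data vector then means exactly that the image of the left matrix is contained in the image of the right matrix.

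For necessity, the derivation before the theorem already shows that any admissible $u_c$ produces $X,U$, defined by \eqref{Xi} and \eqref{U}, that satisfy \eqref{MidEquations}. Necessity is therefore immediate once the vectorization is checked.

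Sufficiency is the main step. Given $x_0,u_d$, take a solution $(X,U)$ of \eqref{MidEquations}. Define the polynomials
\[
u_c(t)=\phi_0(t)u_d(0)+\sum_{i=1}^N\phi_i(t)U_i,\qquad p(t)=\phi_0(t)x_0+\sum_{i=1}^N\phi_i(t)X_i,
\]
both of degree at most $N$. By \eqref{Dirac} and $t_{N,0}=0$, we get $u_c(0)=u_d(0)$ and $p(0)=x_0$. The last two equations of \eqref{MidEquations} give $u_c(\tau)=u_d(1)$ and $p(\tau)=A_dx_0+B_du_d(0)=x_d(1;x_0,u_d)$. Hence $u_c\in\mathbb{I}_N^\tau(u_d)$ and $p$ interpolates the discrete state.

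It remains to show that $p=x_c(\cdot;x_0,u_c)$ on $[0,\tau]$, and this is the delicate point. Consider the residual $r(t)=\dot p(t)-A_cp(t)-B_cu_c(t)$. The first two equations of \eqref{MidEquations} say precisely that $r(t_{N,j})=0$ for $j=0,\dots,N$. However, $r$ is a polynomial of degree up to $N$, since $A_cp$ and $B_cu_c$ have degree $N$. Its vanishing at $N+1$ distinct nodes therefore forces $r\equiv0$. So $p$ solves \eqref{ContinuousSystem} with $p(0)=x_0$, and uniqueness of solutions gives $p=x_c(\cdot;x_0,u_c)$ on $[0,\tau]$. This establishes \eqref{InterpolatingConditionSimple}, and Proposition~\ref{PropositionSingleStepCheck} concludes the proof.

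The obstacle I expect is this degree-counting argument for $r\equiv0$, which uses the $N+1$ distinct nodes. The rest is bookkeeping: making sure the Kronecker and vectorization identities produce exactly the block structure in \eqref{Finalcharacterization}, including the signs and the $-\phi_0(\tau)I$ entries.
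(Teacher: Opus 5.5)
Your proof is correct and matches the paper's argument step for step: the reduction via Proposition~\ref{PropositionSingleStepCheck}, the vectorization of \eqref{MidEquations} into \eqref{ThTechnical1}, the Lagrange-basis construction of $u_c$ and the candidate state, and ODE uniqueness. The one difference is how you show the residual $r$ vanishes identically. The paper uses Gauss--Radau exactness \eqref{NormComputation} to get $\int_0^\tau |r(t)|^2\,\mathrm{d}t=0$. You instead note that a polynomial of degree at most $N$ with $N+1$ distinct roots is zero, which is more elementary and needs only distinct nodes, so that step is not really an obstacle.
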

\vspace*{2mm}
\begin{proof}
	To show necessity, suppose $\bm{\Sigma}_c$ is an $N$-th order interpolator of $\bm{\Sigma}_d$ with respect to the sampling time $\tau$. Let $x_0\in\mathbb{R}^n$ and $u_d:\llbracket0,1\rrbracket\rightarrow\mathbb{R}^m$. It then follows from Proposition~\ref{PropositionSingleStepCheck} that there exists $u_c\in\mathbb{I}_N^\tau (u_d)$ such that \eqref{InterpolatingConditionSimple} holds. We then follow the procedure described above to conclude that \eqref{MidEquations} holds for matrices $U$ and $X$ respectively defined as in \eqref{U} and \eqref{XMatrix}. This, as a consequence of matrix vectorization, results in 
	\begin{equation}\label{ThTechnical1}
		\begin{aligned}
			&\begingroup 
			\setlength\arraycolsep{2pt}\begin{bmatrix}
				\psi_0^\top \otimes I & 0\\
				\Psi^\top\oplus(-A_c)&-I \otimes B_c\\
				\Phi^\top \otimes I&0\\
				0 &\Phi^\top \otimes I
			\end{bmatrix} 
			\endgroup \begin{bmatrix}
				\operatorname{vec}(X)\\\operatorname{vec}(U)
			\end{bmatrix} 
			\\
			&= \begingroup 
			\setlength\arraycolsep{2pt}\begin{bmatrix}
				A_c-\dot{\phi}_0(0)I&B_c&0\\
				-\sigma^\top\otimes I&0&0\\
				A_d - \phi_0(\tau)I&B_d&0\\
				0&-\phi_0(\tau)I&I
			\end{bmatrix}\endgroup \begin{bmatrix}
				x_0\\u_d(0)\\u_d(1)
			\end{bmatrix}.
		\end{aligned}
	\end{equation}
	As $x_0$ and $u_d$ are chosen arbitrarily, we conclude from \eqref{ThTechnical1} that \eqref{Finalcharacterization} holds. 
	
	To show sufficiency, suppose that \eqref{Finalcharacterization} holds. Letting $x_0\in\mathbb{R}^n$ and $u_d:\llbracket0,1\rrbracket\rightarrow\mathbb{R}^m$, we conclude from \eqref{Finalcharacterization} that there exist matrices $X\in\mathbb{R}^{n\times N}$ and $U\in\mathbb{R}^{m\times N}$ such that \eqref{ThTechnical1} holds. We will use this matrix $U$ to construct $u_c:[0,\tau]\rightarrow\mathbb{R}^m$ such that $u_c\in\mathbb{I}_N^\tau(u_d)$ and that \eqref{InterpolatingConditionSimple} holds. We then utilize Proposition~\ref{PropositionSingleStepCheck} to conclude that $\bm{\Sigma}_c$ is an $N$-th order interpolator of $\bm{\Sigma}_d$ with respect to the sampling time $\tau$.
	
	As the first step, we utilize matrix vectorization to equivalently write \eqref{ThTechnical1} as \eqref{MidEquations}. We let  $U_1,U_2,\cdots,U_N\in\mathbb{R}^m$ be the columns of $U$, \textit{i.e.,} $U = [U_1 \; U_2 \; \cdots \; U_N]$. We then define $u_c:[0,\tau]\rightarrow\mathbb{R}^{m}$ such that
	\begin{equation}
		\forall t\in[0,\tau]: \quad u_c(t) = \phi_0(t)u_d(0) + \sum_{i=1}^{N}\phi_i(t)U_i,
	\end{equation}
	which, as a consequence of \eqref{PhiFunction}, implies that $u_c\in\restr{\mathbb{P}_N^m}{[0,\tau]}$. We note from \eqref{Dirac} that $u_c(t_{N,0}) = u_d(0)$ and that
	\begin{equation*}
		\forall i\in\{1,\ldots,N\}:\quad u_c(t_{N,i}) = U_i.
	\end{equation*}
	We further note from the last equation of \eqref{MidEquations} that $u_c(\tau) = u_d(1)$. This, by Definition~\ref{DefSignalInterpolation}, indicates that $u_c\in\mathbb{I}_N^\tau(u_d)$. 
	
	We now show that \eqref{InterpolatingConditionSimple} holds. For this purpose, we let $X_1,X_2,\cdots,X_N\in\mathbb{R}^n$ be such that \eqref{XMatrix} holds. We accordingly define $\bar{x}_c:[0,\tau]\rightarrow\mathbb{R}^n$ such that 
	\begin{equation*}
		\bar{x}_c(t) = \phi_0(t) x_0 + \sum_{i=1}^{N}\phi_i(t)X_i.
	\end{equation*}
	which, as a result of \eqref{PhiFunction}, implies that $\bar{x}_c\in\restr{\mathbb{P}_N^n}{[0,\tau]}$. Furthermore, it follows from \eqref{Dirac} that $\bar{x}_c(t_{N,0}) = x_0$ and that
	\begin{equation*}
		\forall i\in\{1,\ldots,N\}:\quad \bar{x}_c(t_{N,i}) = X_i,
	\end{equation*}
	which, as a consequence of the third equation of \eqref{MidEquations}, implies that $\bar{x}_c(\tau) = x_d(1;x_0,u_d)$. This, by Definition~\ref{DefSignalInterpolation}, indicates that
	\begin{equation}\label{ThTechnical5}
		\bar{x}_c \in \mathbb{I}_N^\tau \Big(\restr{x_d(\cdot;x_0,u_d)}{\llbracket0,1\rrbracket}\Big).
	\end{equation}
	It now remains to show that $\restr{x_c(\cdot;x_0,u_c)}{[0,\tau]} = \bar{x}_c$. To accomplish this, we first utilize the first and second equations of \eqref{MidEquations} to conclude that 
	\begin{equation*}
		\forall i\in\{0,1,\cdots,N\}: \quad \dot{\bar{x}}_c (t_{N,i}) = A_c\bar{x}_c(t_{N,i}) + B_cu_c(t_{N,i}).
	\end{equation*}
	Bearing in mind that $\bar{x}_c, \dot{\bar{x}}_c\in\restr{\mathbb{P}_N^{n}}{[0,\tau]}$ and $u_c\in\restr{\mathbb{P}_N^m}{[0,\tau]}$, we utilize \eqref{NormComputation} to obtain 
	\begin{equation*}
		\begin{aligned}
			&\int_{0}^{\tau} |\dot{\bar{x}}_c(t)-A_c\bar{x}_c(t)-B_cu_c(t)|^2\dt\\
			 &= \sum_{i=0}^{N}|\dot{\bar{x}}_c (t_{N,i}) - A_c\bar{x}_c(t_{N,i}) - B_cu_c(t_{N,i})|^2w_{N,i}\\
			&= 0,
		\end{aligned}
	\end{equation*}
	which, in turn, implies that for all $t\in[0,\tau]$,
	\begin{equation*}
		\dot{\bar{x}}_c(t) = A_c\bar{x}_c(t)+B_cu_c(t). 
	\end{equation*}
	We now define $\tilde{x}_c: [0,\tau]\rightarrow\mathbb{R}^n$ as
	\begin{equation*}
		\forall t\in[0,\tau]: \quad  \tilde{x}_c(t) = x_c(t;x_0,u_c) - \bar{x}_c(t).
	\end{equation*}
	We note from \eqref{ThTechnical5} that $\tilde{x}_c(0) = 0$. We then utilize \eqref{ContinuousSystem} to obtain
	\begin{equation}\label{Technical1}
		\forall t\in[0,\tau]: \quad \dot{\tilde{x}}_c(t) = A_c \tilde{x}_c(t).
	\end{equation}
	By recalling that $\tilde{x}_c(0) = 0$, we conclude from uniqueness of the solution of \eqref{Technical1} that
	\begin{equation*}
		\forall t\in[0,\tau]:\quad \tilde{x}_c(t) = 0,
	\end{equation*}
	which indicates that $\restr{x_c(\cdot;x_0,u_c)}{[0,\tau]} = \bar{x}_c$. It then follows from \eqref{ThTechnical5} that \eqref{InterpolatingConditionSimple} holds. As $x_0\in\mathbb{R}^n$ and $u_d:\llbracket0,1\rrbracket\rightarrow\mathbb{R}^m$ are chosen arbitrarily, we conclude from Proposition~\ref{PropositionSingleStepCheck} that $\bm{\Sigma}_c$ is an $N$-th order interpolator of $\bm{\Sigma}_d$ with respect to $\tau$.
\end{proof}

Theorem~\ref{ThCharacterization} allows one to determine if $\bm{\Sigma}_c$ is an $N$-th order interpolator of $\bm{\Sigma}_d$ with respect to the sampling time $\tau$ by simply checking the subspace inclusion \eqref{Finalcharacterization}, which is solely in terms of $\tau$, $N$, and the parameters of $\bm{\Sigma}_c$ and $\bm{\Sigma}_d$. This is computationally efficient as \eqref{Finalcharacterization} can be equivalently formulated as a \emph{rank} condition. 

Having characterized system interpolation, we now suppose that $\bm{\Sigma}_c$ is an $N$-th order interpolator of $\bm{\Sigma}_d$ with respect to the sampling time $\tau$. Given $\ell\in\mathbb{Z}_{>0}$, $x_0\in\mathbb{R}^n$, and $u_d:\llbracket 0,\ell\rrbracket\rightarrow\mathbb{R}^m$, we employ Theorem~\ref{ThCharacterization} and Proposition~\ref{PropositionSingleStepCheck} to characterize the set of all continuous-time inputs $u_c\in\mathbb{I}_N^\tau(u_d)$ that establish \eqref{InterpolatingCondition}. To accomplish this, for any integer $0\leq i\leq \ell-1$, we let $X^i\in\mathbb{R}^{n\times N}$ and $U^i\in\mathbb{R}^{m\times N}$ be such that
\begin{equation}\label{AlgorithmTechnical1}
	\begin{aligned}
		&\begingroup 
		\setlength\arraycolsep{2pt}\begin{bmatrix}
			\psi_0^\top \otimes I & 0\\
			\Psi^\top\oplus (-A_c)&-I \otimes B_c\\
			\Phi^\top \otimes I&0\\
			0 &\Phi^\top \otimes I
		\end{bmatrix} 
		\endgroup \begin{bmatrix}
			\operatorname{vec}(X^i)\\\operatorname{vec}(U^i)
		\end{bmatrix}\\
		& = \begingroup 
		\setlength\arraycolsep{2pt}\begin{bmatrix}
			A_c-\dot{\phi}_0(0)I&B_c&0\\
			-\sigma^\top\otimes I&0&0\\
			A_d - \phi_0(\tau)I&B_d&0\\
			0&-\phi_0(\tau)I&I
		\end{bmatrix}\endgroup \begin{bmatrix}
			x_d(i;x_0,u_d)\\u_d(i)\\u_d(i+1)
		\end{bmatrix}.
	\end{aligned}
\end{equation}
We note from Theorem~\ref{ThCharacterization} that \eqref{AlgorithmTechnical1} is always \emph{consistent}, \textit{i.e.,} there exist matrices $X^i$ and $U^i$ such that \eqref{AlgorithmTechnical1} holds. It is also worth noting that the matrices $X^i$ and $U^i$ are \emph{not} necessarily unique. We then partition $U^i$ as $U^i = [U_1^i\,U_2^i\,\cdots\,U_N^i]$ and accordingly define $u_c^i\in\restr{\mathbb{P}_N^m}{[0,\tau]}$ such that 
\begin{equation}\label{AlgorithmTechnical2}
	\forall t\in[0,\tau]:\quad u_c^i(t) = \phi_0(t) u_d(i) + \sum_{j=1}^{N}\phi_j(t)U_j^i.
\end{equation}
Hence, we observe that the solutions of \eqref{AlgorithmTechnical1} induce a set
\begin{equation}\label{SetUci}
	\mathcal{U}_c^i(x_0,u_d)\! = \!\left\{\left. \!u_c^i\in\restr{\mathbb{P}_N^m}{[0,\tau]} \right\vert \!\text{$\exists X^i\!,U^i$s.t. \eqref{AlgorithmTechnical1}--\eqref{AlgorithmTechnical2} hold} \right\}\!,
\end{equation}
based on which, we construct the set of continuous-time inputs 
\begin{align}
	\mathcal{U}_c(x_0,u_d) = \Big\{u_c: [0,\ell \tau]&\rightarrow\mathbb{R}^m \big\vert \text{$\forall i\in\llbracket0,\ell-1\rrbracket,$}\label{SetUc}\\
	&\text{$\exists u_c^i\in\mathcal{U}_c^i(x_0,u_d)$ s.t. \eqref{PropTechnical4} holds}\Big\} .\nonumber
\end{align}
We note that since \eqref{AlgorithmTechnical1} is consistent for any choice of $\ell\in\mathbb{Z}_{>0}$, $x_0\in\mathbb{R}^n$, and $u_d:\llbracket0,\ell\rrbracket \rightarrow\mathbb{R}^m$, the set $\mathcal{U}_c(x_0,u_d)$ is always \emph{non-empty}. This observation is formally stated in the following lemma. 
\begin{lemma}\label{LmNonEmpty}
	For a sampling time $\tau\in\mathbb{R}_{>0}$ and an integer $N\geq 1$, suppose $\bm{\Sigma}_c$ is an $N$-th order interpolator of $\bm{\Sigma}_d$ with respect to the sampling time $\tau$. Then, for any $\ell\in\mathbb{Z}_{>0}$, $x_0\in\mathbb{R}^n$, and $u_d:\llbracket0,\ell\rrbracket \rightarrow\mathbb{R}^m$, the set of continuous-time inputs $\mathcal{U}_c(x_0,u_d)$, defined in \eqref{SetUc}, is non-empty. 
\end{lemma}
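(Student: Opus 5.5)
The plan is to exhibit one element of $\mathcal{U}_c(x_0,u_d)$ directly, working one interval at a time and then concatenating. Fix $\ell\in\mathbb{Z}_{>0}$, $x_0\in\mathbb{R}^n$, and $u_d:\llbracket0,\ell\rrbracket\rightarrow\mathbb{R}^m$. For each $i\in\llbracket0,\ell-1\rrbracket$, the right-hand side of \eqref{AlgorithmTechnical1} is the left matrix in \eqref{Finalcharacterization} applied to the vector $\operatorname{col}\big(x_d(i;x_0,u_d),u_d(i),u_d(i+1)\big)$. It therefore lies in the image of that matrix.

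Since $\bm{\Sigma}_c$ is an $N$-th order interpolator of $\bm{\Sigma}_d$ with respect to $\tau$, Theorem~\ref{ThCharacterization} gives the inclusion \eqref{Finalcharacterization}. Hence the right-hand side of \eqref{AlgorithmTechnical1} also lies in the image of the right matrix in \eqref{Finalcharacterization}. This means there is a vector $\operatorname{col}(v_x,v_u)$ with $v_x\in\mathbb{R}^{nN}$ and $v_u\in\mathbb{R}^{mN}$ that is mapped onto it. Un-vectorizing, I set $X^i$ and $U^i$ to be the unique matrices with $\operatorname{vec}(X^i)=v_x$ and $\operatorname{vec}(U^i)=v_u$, so that \eqref{AlgorithmTechnical1} holds. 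Defining $u_c^i$ through \eqref{AlgorithmTechnical2} with these columns $U^i_j$ then gives $u_c^i\in\mathcal{U}_c^i(x_0,u_d)$; in particular, each $\mathcal{U}_c^i(x_0,u_d)$ is non-empty.

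Having chosen one $u_c^i$ for every $i$, I define $u_c:[0,\ell\tau]\rightarrow\mathbb{R}^m$ by \eqref{PropTechnical4}, i.e., $u_c(i\tau+t)=u_c^i(t)$ for $t\in[0,\tau]$. By \eqref{SetUc}, this $u_c$ belongs to $\mathcal{U}_c(x_0,u_d)$, which proves the claim.

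The only potential obstacle is that \eqref{PropTechnical4} must give a well-defined function at the shared endpoints $t=(i+1)\tau$, where both $u_c^i(\tau)$ and $u_c^{i+1}(0)$ prescribe a value. This is handled as follows. By \eqref{Dirac} and $t_{N,0}=0$, we have $u_c^{i+1}(0)=u_d(i+1)$. The last block row of \eqref{AlgorithmTechnical1} is $U^i\Phi=u_d(i+1)-\phi_0(\tau)u_d(i)$, which, evaluated in \eqref{AlgorithmTechnical2} at $t=\tau$, gives $u_c^i(\tau)=u_d(i+1)$. The two values therefore coincide, so $u_c$ is well defined.
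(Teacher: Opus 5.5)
Your proposal is correct and follows the same route as the paper: Theorem~\ref{ThCharacterization} makes each instance of \eqref{AlgorithmTechnical1} solvable, the resulting $U^i$ define $u_c^i\in\mathcal{U}_c^i(x_0,u_d)$ via \eqref{AlgorithmTechnical2}, and these pieces are concatenated as required by \eqref{SetUc}. Your endpoint-compatibility check, $u_c^i(\tau)=u_d(i+1)=u_c^{i+1}(0)$, covers a detail that the paper's proof leaves implicit, and you handle it correctly.
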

\begin{proof}
	Let $\ell\in\mathbb{Z}_{>0}$, $x_0\in\mathbb{R}^n$, and $u_d:\llbracket0,\ell\rrbracket \rightarrow\mathbb{R}^m$. Since $\bm{\Sigma}_c$ is an $N$-th order interpolator of $\bm{\Sigma}_d$ with respect to $\tau$, it follows from Theorem~\ref{ThCharacterization} that for any integer $0\leq i\leq \ell-1$, there exist matrices $X^i\in\mathbb{R}^{n\times N}$ and $U^i\in\mathbb{R}^{m\times N}$ such that \eqref{AlgorithmTechnical1} holds. We may therefore use this matrix $U^i$ to define $u_c^i\in\restr{\mathbb{P}_N^m}{[0,\tau]}$ as in \eqref{AlgorithmTechnical2}. It thus follows from \eqref{SetUci} that $\mathcal{U}_c^i(x_0,u_d)$ is non-empty for all $0\leq i\leq \ell-1$, which, by \eqref{SetUc}, implies that $\mathcal{U}_c(x_0,u_d)$ is non-empty.
\end{proof}

We now exploit Lemma~\ref{LmNonEmpty} to obtain the following result, which shows that $\mathcal{U}_c(x_0,u_d)$ is the set of all continuous-time inputs $u_c\in\mathbb{I}_N^\tau(u_d)$ that establish \eqref{InterpolatingCondition}.
\begin{theorem}\label{ThInputSetCharacterization}
	For a sampling time $\tau\in\mathbb{R}_{>0}$ and an integer $N\geq 1$, suppose $\bm{\Sigma}_c$ is an $N$-th order interpolator of $\bm{\Sigma}_d$ with respect to the sampling time $\tau$. Then, for any integer $\ell \in \mathbb{Z}_{>0}$, any initial condition $x_0\in\mathbb{R}^n$, and any discrete-time input $u_d:\llbracket 0,\ell \rrbracket \rightarrow \mathbb{R}^m$, 
	\begin{equation}\label{CompleteSet}
		u_c \in \mathcal{U}_c(x_0,u_d) \Longleftrightarrow  \text{$u_c\in\mathbb{I}_N^\tau (u_d)$ s.t. \eqref{InterpolatingCondition} holds,}
	\end{equation}
	where $\mathcal{U}_c(x_0,u_d)$ is defined as in \eqref{SetUc}.
\end{theorem}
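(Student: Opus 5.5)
We prove the two implications in \eqref{CompleteSet} separately. Both directions reduce the problem to single intervals $[i\tau,(i+1)\tau]$, as in the proofs of Proposition~\ref{PropositionSingleStepCheck} and Theorem~\ref{ThCharacterization}. The common tool is time-invariance: since both $\bm{\Sigma}_c$ and $\bm{\Sigma}_d$ are time-invariant, the behaviour on the $i$-th interval depends only on the state at its left endpoint.

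For ($\Rightarrow$), let $u_c\in\mathcal{U}_c(x_0,u_d)$. For each $i\in\llbracket0,\ell-1\rrbracket$ there is then some $u_c^i\in\mathcal{U}_c^i(x_0,u_d)$, built from matrices $X^i,U^i$ that satisfy \eqref{AlgorithmTechnical1}, with $u_c(i\tau+t)=u_c^i(t)$. Vectorization turns \eqref{AlgorithmTechnical1} into \eqref{MidEquations}, with $x_0$ replaced by $x_d(i;x_0,u_d)$ and $u_d(0),u_d(1)$ replaced by $u_d(i),u_d(i+1)$.

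The sufficiency part of the proof of Theorem~\ref{ThCharacterization} can now be repeated verbatim. First, $u_c^i\in\mathbb{I}_N^\tau(u_d^i)$, where $u_d^i$ is as in \eqref{PropTechnical1}. Second, the polynomial $\bar x_c^i=\phi_0 x_d(i;x_0,u_d)+\sum_j\phi_j X^i_j$ solves $\dot{\bar x}=A_c\bar x+B_cu_c^i$ on $[0,\tau]$. This follows from the collocation equations together with the exact quadrature \eqref{NormComputation}, and uniqueness of solutions then identifies $\bar x_c^i$ with $x_c(\cdot;x_d(i;x_0,u_d),u_c^i)$. Third, $\bar x_c^i$ interpolates $x_d(i;x_0,u_d)$ and $x_d(i+1;x_0,u_d)$. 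Concatenating the $u_c^i$ gives $u_c\in\mathbb{I}_N^\tau(u_d)$, because the endpoint values match. The induction in the sufficiency proof of Proposition~\ref{PropositionSingleStepCheck} then yields \eqref{InterpolatingCondition}.

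For ($\Leftarrow$), let $u_c\in\mathbb{I}_N^\tau(u_d)$ satisfy \eqref{InterpolatingCondition}, and define $u_c^i(t)=u_c(i\tau+t)$ for $t\in[0,\tau]$. By Definition~\ref{DefSignalInterpolation}, $u_c^i\in\restr{\mathbb{P}_N^m}{[0,\tau]}$ with $u_c^i(0)=u_d(i)$ and $u_c^i(\tau)=u_d(i+1)$. Moreover, $x_c(i\tau;x_0,u_c)=x_d(i;x_0,u_d)$, and the restriction $x_c(i\tau+\cdot;x_0,u_c)$ is a polynomial of degree at most $N$ taking the value $x_d(i+1;x_0,u_d)$ at $\tau$. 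By time-invariance this restriction equals $x_c(\cdot;x_d(i;x_0,u_d),u_c^i)$. Hence \eqref{InterpolatingConditionSimple} holds with initial state $x_d(i;x_0,u_d)$ and discrete input $u_d^i$.

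The derivation leading to \eqref{MidEquations} then applies. Set $U^i=[u_c^i(t_{N,1})\,\cdots\,u_c^i(t_{N,N})]$ and $X^i=[x_c(i\tau+t_{N,1};x_0,u_c)\,\cdots]$; these satisfy \eqref{MidEquations}, and hence \eqref{AlgorithmTechnical1}. The representation \eqref{SpectralRepresentation} together with $t_{N,0}=0$ shows that $u_c^i$ has exactly the form \eqref{AlgorithmTechnical2}. Therefore $u_c^i\in\mathcal{U}_c^i(x_0,u_d)$ for every $i$, and by \eqref{SetUc} $u_c\in\mathcal{U}_c(x_0,u_d)$.

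Most of the argument is bookkeeping. The step needing the most care is in ($\Rightarrow$): the collocation equations at the $N+1$ nodes must be shown to force the ODE on the whole interval. The degree counting requires care here: the residual $\dot{\bar x}-A_c\bar x-B_cu_c^i$ is a polynomial of degree at most $N$. It vanishes at the $N+1$ distinct nodes, so it vanishes identically. This is simpler than, and equivalent to, the quadrature argument used in the paper. I will also check that the induction through time-invariance correctly chains the interval endpoints, since the endpoint conditions are built into \eqref{AlgorithmTechnical1} by the choice $x_d(i;x_0,u_d)$. Lemma~\ref{LmNonEmpty} is not needed for the equivalence itself; it only guarantees that both sides are non-empty.
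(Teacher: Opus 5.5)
Your proposal is correct and follows the paper's proof essentially step for step. Both directions localize to $[i\tau,(i+1)\tau]$ via \eqref{PropTechnical1}, \eqref{PropTechnical4} and time-invariance, reuse the sufficiency (resp.\ necessity) part of the proof of Theorem~\ref{ThCharacterization}, and chain the intervals as in the proof of Proposition~\ref{PropositionSingleStepCheck}; your degree-counting substitute for the quadrature step (a residual of degree at most $N$ vanishing at $N+1$ distinct nodes) is valid and slightly cleaner, and you are right that Lemma~\ref{LmNonEmpty} is not needed for the equivalence itself.
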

\begin{proof}
	Let $\ell \in \mathbb{Z}_{>0}$, $x_0\in\mathbb{R}^n$, and $u_d:\llbracket 0,\ell \rrbracket \rightarrow \mathbb{R}^m$. Since $\bm{\Sigma}_c$ is an $N$-th order interpolator of $\bm{\Sigma}_d$ with respect to $\tau$, it follows from Lemma~\ref{LmNonEmpty} that $\mathcal{U}_c(x_0,u_d)$ is non-empty. 
	
	We first let $u_c\in\mathcal{U}_c(x_0,u_d)$ and show that $u_c\in\mathbb{I}_N^\tau(u_d)$ and that \eqref{InterpolatingCondition} holds. To do so, for any integer $0\leq i\leq \ell-1$, we define $u_{d}^i: \llbracket 0,1\rrbracket\rightarrow\mathbb{R}^m$ according to \eqref{PropTechnical1}. We also let $u_c^i:[0,\tau]\rightarrow\mathbb{R}^m$ be such that \eqref{PropTechnical4} holds. It then follows from the definition of $\mathcal{U}_c(x_0,u_d)$ in \eqref{SetUci} and \eqref{SetUc} that there exist $X^i\in\mathbb{R}^{n\times N}$ and $U^i\in\mathbb{R}^{m\times N}$ such that \eqref{AlgorithmTechnical1} and \eqref{AlgorithmTechnical2} hold. Bearing \eqref{PropTechnical1} in mind, we follow the \emph{sufficiency} part of the proof of Theorem~\ref{ThCharacterization} to conclude that $u_c^i\in\mathbb{I}_N^\tau(u_d^i)$.  Furthermore, after partitioning $X^i$ as $X^i = [X_1^i \, X_2^i\,\cdots\,X_N^i]$, the procedure described in the proof of Theorem~\ref{ThCharacterization} gives
	\begin{equation*}
		{\textstyle	\restr{x_c \big(\cdot; x_d(i; x_0,u_d),u_c^i\big)}{[0,\tau]}\!\in\!\mathbb{I}_N^\tau \Big(\! \restr{x_d\big(\cdot;x_d(i; x_0,u_d),u_d^i\big)}{\llbracket0,1\rrbracket}\!\Big)}.
	\end{equation*}
	Additionally, we have that 
	\begin{subequations}\label{Technical5}
		\begin{equation}
			\restr{x_c \big(0; x_d(i; x_0,u_d),u_c^i\big)}{[0,\tau]} = x_d(i; x_0,u_d),
		\end{equation}
		\text{and that, at the quadrature nodes,}
		\begin{equation}
			\begin{aligned}
				\restr{x_c \big(t_{N,j}; x_d(i; x_0,u_d),u_c^i\big)}{[0,\tau]} = X_j^i,
			\end{aligned}
		\end{equation}
	\end{subequations}
	for all $j=1,2,\ldots,N$.
	We then recall \eqref{DiscreteInvariance} and follow the \emph{sufficiency} part of the proof of Proposition~\ref{PropositionSingleStepCheck} to conclude that $u_c\in\mathbb{I}_N^\tau(u_d)$ and that \eqref{InterpolatingCondition} holds. Importantly, it also follows from the proof of Proposition~\ref{PropositionSingleStepCheck} that for all integers $0\leq i\leq \ell-1$, we have that for all $ t\in[0,\tau]$,
	\begin{equation}\label{AlgorithmTechnical3}
		x_c (i\tau+t; x_0,u_c) = \phi_0(t) x_d(i;x_0,u_d) + \sum_{j=1}^{N}\phi_j(t)X_j^i.
	\end{equation}
	Moreover, it follows from \eqref{Technical5} that 
	\begin{subequations}\label{Technical6}
		\begin{equation}
			x_c (i\tau; x_0,u_c) = x_d(i;x_0,u_d),
		\end{equation}
		\text{and that }
		\begin{equation}
			\forall j\in\{1,2,\ldots,N\}: \quad x_c (i\tau+t_{N,j}; x_0,u_c) = X_j^i.
		\end{equation}
	\end{subequations}
	
	We now let $u_c\in\mathbb{I}_N^\tau(u_d)$ be such that \eqref{InterpolatingCondition} holds, and we show that $u_c\in\mathcal{U}_c(x_0,u_d)$. To accomplish this, for any integer $0\leq i\leq \ell-1$, we define $u_{d}^i: \llbracket 0,1\rrbracket\rightarrow\mathbb{R}^m$ and $u_c^i:[0,\tau]\rightarrow\mathbb{R}^m$ such that \eqref{PropTechnical1} and \eqref{PropTechnical4} hold. Bearing \eqref{DiscreteInvariance} in mind, we conclude from $u_c\in\mathbb{I}_N^\tau(u_d)$ and \eqref{InterpolatingCondition} that $u_c^i\in\mathbb{I}_N^\tau(u_d^i)$ and that
	\begin{equation*}
		{\textstyle	\restr{x_c \big(\cdot; x_d(i; x_0,u_d),u_c^i\big)}{[0,\tau]}\!\in\!\mathbb{I}_N^\tau \Big(\! \restr{x_d\big(\cdot;x_d(i; x_0,u_d),u_d^i\big)}{\llbracket0,1\rrbracket}\!\Big)}.
	\end{equation*}
	Now, for $j=1,2,\ldots,N$, we define
	\begin{equation*}
		\begin{aligned}
			U_j^i &= u_c^i(t_{N,j}), & X_j^i &= x_c(i\tau+t_{N,j};x_0,u_c),
		\end{aligned}
	\end{equation*} 
	and accordingly construct the matrices 
	\begin{equation}\label{Technical3}
		\begin{aligned}
			U^i=\begin{bmatrix}
				U_1^i, U_2^i,\cdots,U_N^i
			\end{bmatrix},\;\; X^i = \begin{bmatrix}
				X_1^i,X_2^i,\cdots,X_N^i
			\end{bmatrix}.
		\end{aligned}
	\end{equation}
	We then conclude from $u_c^i\in\mathbb{I}_N^\tau(u_d^i)$ that \eqref{AlgorithmTechnical2} holds. Moreover, we follow the \emph{necessity} part of the proof of Theorem~\ref{ThCharacterization} to conclude that \eqref{AlgorithmTechnical1} holds. As a consequence, for all $0\leq i\leq \ell-1$, there exist matrices $U^i$ and $X^i$ such that \eqref{AlgorithmTechnical1} and \eqref{AlgorithmTechnical2} hold, which, together with \eqref{SetUc} and \eqref{SetUci}, implies that $u_c\in\mathcal{U}_c(x_0,u_d)$.
\end{proof}

According to Theorem~\ref{ThInputSetCharacterization}, for given $\ell \in \mathbb{Z}_{>0}$, $x_0\in\mathbb{R}^n$, and $u_d:\llbracket 0,\ell \rrbracket \rightarrow \mathbb{R}^m$, $\mathcal{U}_c(x_0,u_d)$ is the set of all continuous-time inputs $u_c\in\mathbb{I}_N^\tau(u_d)$ that establish \eqref{InterpolatingCondition}. This is crucial as it practically gives a step-by-step procedure for the construction of such $u_c$. In fact, given any $u_d:\llbracket 0,\ell \rrbracket \rightarrow \mathbb{R}^m$, one may follow the first part of the proof of Theorem~\ref{ThInputSetCharacterization} to construct a $u_c\in\mathbb{I}_N^\tau(u_d)$ that establishes \eqref{InterpolatingCondition}. This becomes essential in control synthesis for a continuous-time system on the basis of its discrete-time model (see Section~\ref{Sec_ControlSynthesis}). 
\section{Discretization by System Interpolation}\label{Sec_Discretization}
We now exploit system interpolation to conduct discretization. More specifically, given a continuous-time system \eqref{ContinuousSystem}, for a sampling time $\tau\in\mathbb{R}_{>0}$ and an integer $N\geq 1$, we construct a discrete-time system \eqref{DiscreteSystem} in such a way that \eqref{ContinuousSystem} is an $N$-th order interpolator of \eqref{DiscreteSystem} with respect to the sampling time $\tau$. For this purpose, we make use of Theorem~\ref{ThCharacterization} and seek matrices $A_d$ and $B_d$ such that \eqref{Finalcharacterization} holds. 

We therefore consider the subspace inclusion \eqref{Finalcharacterization}, where matrices $A_d$ and $B_d$ are now regarded as \emph{unknowns}. It follows from \eqref{Finalcharacterization} that there exists a matrix $M \in \mathbb{R}^{(nN+mN)\times (n+2m)}$ such that
\begin{equation*}\label{IntermediateTechnical}
	\begingroup 
	\setlength\arraycolsep{2pt}\begin{bmatrix}
		\psi_0^\top \otimes I & 0\\
		\Psi^\top\oplus(-A_c)&-I \otimes B_c\\
		\Phi^\top \otimes I&0\\
		0 &\Phi^\top \otimes I
	\end{bmatrix} 
	\endgroup M = \begingroup\setlength\arraycolsep{1.25pt}\begin{bmatrix}
		A_c-\dot{\phi}_0(0)I&B_c&0\\
		-\sigma^\top\otimes I&0&0\\
		A_d - \phi_0(\tau)I&B_d&0\\
		0&\!\!\!-\phi_0(\tau)I&I
	\end{bmatrix}\endgroup,
\end{equation*}
which can be written as 
\begin{equation}\label{CompactRep}
	Q_N^\tau (A_c,B_c) M = R_N^\tau(A_c,B_c) + T_1\begin{bmatrix}
		A_d&B_d
	\end{bmatrix}T_2, 
\end{equation}
where
\begin{subequations}\label{QRSV}
	\begin{equation}
		\begin{aligned}
			Q_N^\tau (A_c,B_c) &= \begin{bmatrix}
				\psi_0^\top \otimes I & 0\\
				\Psi^\top\oplus(-A_c)&-I \otimes B_c\\
				\Phi^\top \otimes I&0\\
				0 &\Phi^\top \otimes I
			\end{bmatrix},\\
			R_N^\tau(A_c,B_c) &= \begingroup\setlength\arraycolsep{1.25pt}\begin{bmatrix}
				A_c-\dot{\phi}_0(0)I&B_c&0\\
				-\sigma^\top\otimes I&0&0\\
				- \phi_0(\tau)I&0&0\\
				0&\!\!\!-\phi_0(\tau)I&I
			\end{bmatrix}\endgroup,
		\end{aligned}
	\end{equation}
	and
	\begin{equation}
		\begin{aligned}
			T_1 &= \begin{bmatrix}
				0\\0\\I\\0
			\end{bmatrix}, &T_2&= \begin{bmatrix}
				I&0&0\\
				0&I&0
			\end{bmatrix}. 
		\end{aligned}
	\end{equation}
\end{subequations}
We emphasize that matrices $Q_N^\tau (A_c,B_c)$ and $R_N^\tau(A_c,B_c)$ solely depend on the quadrature nodes (which are determined by the sampling time $\tau$ and the polynomial degree $N$) and the \emph{known} matrices $A_c$ and $B_c$. We also highlight that the representation \eqref{CompactRep} is \emph{linear} in terms of the unknown matrices $M$, $A_d$, and $B_d$. This representation thus enables the following result, which characterizes the existence of matrices $A_d$ and $B_d$ as the solvability of a linear equation that is completely in terms of the quadrature nodes and the system parameters $A_c$ and $B_c$. 
\begin{theorem}\label{Th_Design}
	Given a continuous-time system \eqref{ContinuousSystem}, for a sampling time $\tau\in\mathbb{R}_{>0}$ and an integer $N\geq 1$, there exist matrices $A_d$ and $B_d$ such that \eqref{ContinuousSystem} is an $N$-th order interpolator of \eqref{DiscreteSystem} with respect to the sampling time $\tau$ if and only if there exist vectors $v_1$ and $v_2$ such that 
	\begin{equation}\label{DesignResult}
		\begin{bmatrix}
			I\otimes Q_N^\tau (A_c,B_c) &-T_2^\top \otimes T_1
		\end{bmatrix}\begin{bmatrix}
			v_1\\
			v_2\\
		\end{bmatrix} = \operatorname{vec}\big( R_N^\tau(A_c,B_c) \big).
	\end{equation}
\end{theorem}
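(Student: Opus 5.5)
The plan is to reduce Theorem~\ref{Th_Design} to Theorem~\ref{ThCharacterization} and then vectorize the resulting matrix equation. By Theorem~\ref{ThCharacterization}, for given $A_d,B_d$, system \eqref{ContinuousSystem} is an $N$-th order interpolator of \eqref{DiscreteSystem} with respect to $\tau$ if and only if the inclusion \eqref{Finalcharacterization} holds. An image inclusion $\operatorname{Im} R\subset \operatorname{Im} Q$ holds exactly when there is a matrix $M$ with $QM=R$: apply the inclusion to each column of $R$ and stack the preimages. By the block decomposition displayed before \eqref{CompactRep}, the right-hand side splits as $R_N^\tau(A_c,B_c)+T_1[A_d\;B_d]T_2$. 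To check this, note that $T_1[A_d\;B_d]T_2$ is zero except in the third block row, where it equals $[A_d\;B_d\;0]$; adding this to the third row $[-\phi_0(\tau)I\;0\;0]$ of $R_N^\tau$ recovers the corresponding row of \eqref{Finalcharacterization}. Hence the existence of $A_d,B_d$ making \eqref{ContinuousSystem} an interpolator is equivalent to the existence of $M,A_d,B_d$ satisfying \eqref{CompactRep}.

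Next, I would rewrite \eqref{CompactRep} as $Q_N^\tau M - T_1[A_d\;B_d]T_2 = R_N^\tau$ and apply $\operatorname{vec}$, using the standard identity $\operatorname{vec}(ABC)=(C^\top\otimes A)\operatorname{vec}(B)$. This gives $\operatorname{vec}(Q_N^\tau M)=(I\otimes Q_N^\tau)\operatorname{vec}(M)$ and $\operatorname{vec}(T_1[A_d\;B_d]T_2)=(T_2^\top\otimes T_1)\operatorname{vec}([A_d\;B_d])$. The equation therefore becomes exactly \eqref{DesignResult}, with $v_1=\operatorname{vec}(M)$ and $v_2=\operatorname{vec}([A_d\;B_d])$.

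Both directions then follow because $\operatorname{vec}$ is a linear bijection. For necessity, take $A_d,B_d$ as given, obtain $M$ from Theorem~\ref{ThCharacterization} via the column argument, and set $v_1,v_2$ as above. For sufficiency, given $v_1,v_2$ solving \eqref{DesignResult}, un-vectorize: $v_1$ into $M\in\mathbb{R}^{(nN+mN)\times(n+2m)}$ and $v_2$ into an $n\times(n+m)$ matrix whose first $n$ columns define $A_d$ and whose last $m$ columns define $B_d$. Then \eqref{CompactRep} holds, so $\operatorname{Im} R\subset\operatorname{Im} Q$, and Theorem~\ref{ThCharacterization} yields the interpolator property.

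The only real obstacle is bookkeeping. The identity matrix in $I\otimes Q_N^\tau$ must have size $n+2m$, the number of columns of $M$, and the dimensions of $v_1$ and $v_2$ must match those of $\operatorname{vec}(M)$ and $\operatorname{vec}([A_d\;B_d])$. I would check these dimensions explicitly, together with the block placement of $T_1$ and $T_2$ verified above.
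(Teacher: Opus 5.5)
Your proposal is correct and follows the paper's proof essentially step for step. Both reduce to Theorem~\ref{ThCharacterization}, rewrite the inclusion as the existence of $M$ satisfying \eqref{CompactRep}, and vectorize with $v_1=\operatorname{vec}(M)$ and $v_2=\operatorname{vec}([A_d\;B_d])$ in both directions; your checks of the block placement of $T_1[A_d\;B_d]T_2$ and of the size $n+2m$ of the identity in $I\otimes Q_N^\tau(A_c,B_c)$ add details the paper leaves implicit.
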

\begin{proof}
	To show necessity, we suppose that there exist $A_d$ and $B_d$ such that \eqref{ContinuousSystem} is an $N$-th order interpolator of \eqref{DiscreteSystem} with respect to the sampling time $\tau$. It then follows from Theorem~\ref{ThCharacterization} that \eqref{Finalcharacterization} holds. This, as discussed above, implies the existence of a matrix $M$ such that \eqref{CompactRep} holds. Then, vectorization of \eqref{CompactRep} leads to \eqref{DesignResult} with 
	\begin{equation}\label{IntermediateResult}
		\begin{aligned}
			v_1 &= \operatorname{vec}(M), &v_2&=\operatorname{vec}\Big(\begin{bmatrix}
				A_d&B_d
			\end{bmatrix}\Big).
		\end{aligned}
	\end{equation} 
	
	To prove sufficiency, we suppose that there exist matrices $v_1$ and $v_2$ such that \eqref{DesignResult} holds. We then let matrices $M$, $A_d$, and $B_d$ be such that \eqref{IntermediateResult} hold. We then exploit matrix vectorization to conclude \eqref{CompactRep}. This, together with \eqref{QRSV}, implies \eqref{Finalcharacterization}. It then follows from  Theorem~\ref{ThCharacterization} that \eqref{ContinuousSystem} is an $N$-th order interpolator of \eqref{DiscreteSystem} with respect to the sampling time $\tau$.
\end{proof}

Theorem~\ref{Th_Design} characterizes existence of the discrete-system \eqref{DiscreteSystem} as solvability of the linear equation \eqref{DesignResult}, whose solution gives the parameters of \eqref{DiscreteSystem}, \textit{i.e.,} matrices $A_d$ and $B_d$.

In the next section, we make use of the discrete-time system \eqref{DiscreteSystem}, obtained according to Theorem~\ref{Th_Design}, to conduct control synthesis for the continuous-time system \eqref{ContinuousSystem}.
\section{System Interpolation for Control Synthesis}\label{Sec_ControlSynthesis}
As discussed earlier, hierarchical control schemes that split synthesis into discretization, planning, and execution do not formally guarantee that the continuous-time system fully complies with a specification, as they only guarantee that its state trajectories remain sufficiently close to an interpolation of the planned trajectory (which satisfies the specification only at particular time instances rather than all times). In fact, such schemes often do not provide any measure of the extent to which the specification is satisfied/violated in between these instances. Motivated by this, we utilize system interpolation in control synthesis to 1) ensure that the controlled continuous-time trajectories adhere to the specification at \emph{each} sampled time and 2) measure to what extent they may violate the specification in between the sampled times. 

We therefore consider the continuous-time system \eqref{ContinuousSystem} and use Theorem~\ref{Th_Design} to construct its discrete-time model \eqref{DiscreteSystem} for some sampling time $\tau\in\mathbb{R}_{>0}$ and integer $N\geq 1$. We thus emphasize that \eqref{ContinuousSystem} is an $N$-th order interpolator of \eqref{DiscreteSystem} with respect to $\tau$. 

Given a specification $\mathscr{S}$, for an initial condition $x_0\in\mathbb{R}^n$, suppose that (for some $\ell\in\mathbb{Z}_{>0}$) the control sequence $u_d:\llbracket0,\ell\rrbracket \rightarrow\mathbb{R}^m$ is already designed in such a way that $x_d(\cdot;x_0,u_d)$ adheres to the specification $\mathscr{S}$. This induces a path of regions
\begin{equation}\label{Path}
	\Pi^\ell := \pi_0\pi_1\cdots\pi_\ell, 
\end{equation}
where the \emph{regions of interest} $\pi_0,\pi_1,\cdots,\pi_\ell\subset\mathbb{R}^n$, which are dictated by the specification $\mathscr{S}$, are such that 
\begin{equation}\label{DiscreteRegionSatisfaction}
	\forall i\in\llbracket0,\ell\rrbracket: \quad x_d(i;x_0,u_d) \in \pi_i.
\end{equation}
The path $\Pi^\ell$ therefore \emph{accepts} the specification $\mathscr{S}$ in the sense that the regions of interest $\pi_0,\pi_1,\cdots,\pi_\ell$ and the order according to which they appear conform to the specification $\mathscr{S}$ (for further details, see, \textit{e.g.,} \cite{4459804,belta2017formal,8657716}).
\begin{remark}
	We emphasize that we are not concerned with the design of the control sequence $u_d$, as it depends on the specification $\mathscr{S}$. In accordance to the expression of $\mathscr{S}$, the control sequence $u_d$ can be designed by techniques such as dynamic programming \cite{bertsekas2012dynamic}, MPC \cite{kouvaritakis2016model}, mixed integer linear/quadratic programming (MILP/MIQP) \cite{belta2019formal,7039363, 6907641}, and symbolic control \cite{YIN2024100940,belta2017formal}. \demo
\end{remark}

We make use of the initial condition $x_0$ and the discrete-time input $u_d$ obtained above to construct the set of continuous-time inputs $\mathcal{U}_c(x_0,u_d)$ as in \eqref{SetUc}. Since $\bm{\Sigma}_c$ is an $N$-th order interpolator of $\bm{\Sigma}_d$ with respect to $\tau$. It follows from Theorem~\ref{ThInputSetCharacterization} that any $u_c\in\mathcal{U}_c(x_0,u_d)$ establishes \eqref{InterpolatingCondition}. This, as a consequence of \eqref{DiscreteRegionSatisfaction}, implies that for all $u_c\in\mathcal{U}_c(x_0,u_d)$, we have that 
\begin{equation}\label{ContinuousRegionSatisfaction}
	\forall i\in \llbracket0,\ell\rrbracket: \quad x_c(i\tau;x_0,u_c) \in \pi_i,
\end{equation}
\textit{i.e.,} the controlled trajectory $x_c(\cdot;x_0,u_c)$ satisfies the specification $\mathscr{S}$ at \emph{each} sampled time $t = i\tau$ (for $i=0,1,\cdots,\ell$). This therefore indicates that by choosing the continuous-time control input $u_c\in\mathcal{U}_c(x_0,u_d)$, one ensures that the controlled continuous-time trajectories adhere to the specification at each sampled time.  

We now employ \eqref{ContinuousRegionSatisfaction} to measure the (possible) violation of $\mathscr{S}$ in between sampled times. Towards that end, we first define the \emph{distance} of a point $z^*\in\mathbb{R}^n$ from a region $\pi\subset\mathbb{R}^n$, denoted by $\operatorname{d}(z^*,\pi)$, as 
\begin{equation}\label{Distance}
	\operatorname{d}(z^*,\pi) = \inf_{z\in\pi}|z^*-z|.
\end{equation} 
We accordingly define the \emph{Hausdorff} distance \cite{magnus2021metric} between two regions $\pi,\pi'\subset\mathbb{R}^n$, denoted by $\operatorname{dH}(\pi,\pi')$, as 
\begin{equation}\label{HausdorffDistance}
	\operatorname{dH}(\pi,\pi') = \max \left\{\sup_{z\in\pi} \operatorname{d}(z,\pi'), \sup_{z'\in\pi'} \operatorname{d}(z',\pi) \right\}.
\end{equation}
Then, for a point $z^*\in\mathbb{R}^n$ and regions $\pi,\pi'\subset\mathbb{R}^n$, we observe that 
\begin{equation}\label{DistanceBound}
	\operatorname{d}(z^*,\pi) \leq \operatorname{d}(z^*,\pi')+\operatorname{dH}(\pi,\pi'),
\end{equation}
which indicates that the distance $\operatorname{d}(z^*,\pi)$ can be estimated in terms of the distance $\operatorname{d}(z^*,\pi')$ and the Hausdorff distance $\operatorname{dH}(\pi,\pi')$ (for further details, see, \textit{e.g.,} \cite[Section 1.2.4]{magnus2021metric}). 

Between two consecutive sampled times, we will use the estimation \eqref{DistanceBound} to measure the (possible) violation of $\mathscr{S}$. Taking any $u_c\in\mathcal{U}_c(x_0,u_d)$, we note that \eqref{AlgorithmTechnical3} and \eqref{Technical6} hold for all $i\in\llbracket0,\ell-1\rrbracket$ and all $t\in[0,\tau]$. Thus, for any $j\in\{i,i+1\}$, it follows from \eqref{ContinuousRegionSatisfaction} that
\begin{equation*}
	\begin{aligned}
		\operatorname{d}\!\big(x_c(i\tau+t;x_0,u_c),\pi_j\big)\!&=\!\inf_{z\in\pi_j} \vert x_c(i\tau+t;x_0,u_c) -z\vert \\
		&\leq\!\big\vert x_c(i\tau+t;x_0,u_c)\!-\! x_c(j\tau;x_0,u_c)\big\vert \\
		&=\!\left\vert \int_{j\tau}^{i\tau+t} \dot{x}_c(\hat{t};x_0,u_c) \dhatt \right\vert\\
		&\leq\!\left\vert \int_{j\tau}^{i\tau+t} \vert \dot{x}_c(\hat{t};x_0,u_c) \vert \dhatt\right\vert\\
		&\leq\!\tau\norm{\dot{x}_c(\cdot;x_0,u_c)}_{\mathcal{L}_\infty^n[i\tau,(i+1)\tau]},
	\end{aligned}
\end{equation*}
which, as a consequence of \eqref{ContinuousSystem}, results in
	\begin{align}
		\operatorname{d}\!\big(x_c(i\tau+t;&x_0,u_c),\pi_j\big) \label{Technical4}\\
		&\leq\tau\norm{A_cx_c(\cdot;x_0,u_c) +Bu_c}_{\mathcal{L}_\infty^n[i\tau,(i+1)\tau]}.\nonumber
	\end{align}
Considering that 
\begin{equation*}
	\restr{\big(A_cx_c(\cdot;x_0,u_c) +Bu_c\big)}{[i\tau,(i+1)\tau]}\in\restr{\mathbb{P}_N^n}{[i\tau,(i+1)\tau]},
\end{equation*}
we now recall from \cite[Theorem 4.9.6]{timan2014theory} that for any polynomial $P\in\restr{\mathbb{P}_N^n}{[i\tau,(i+1)\tau]}$, we have that
\begin{equation*}
	\norm{P}_{\mathcal{L}_\infty^n[i\tau,(i+1)\tau]} \leq N\left(\frac{6}{\tau}\right)^{\frac{1}{2}}\norm{P}_{\mathcal{L}_\infty^2[i\tau,(i+1)\tau]},
\end{equation*}
which, together with \eqref{Technical4}, implies that
\begin{align*}
		\operatorname{d}\!\big(&x_c(i\tau+t ;x_0,u_c),\pi_j\big)\\
		&\leq N(6\tau)^{\frac{1}{2}}  \norm{A_cx_c(\cdot;x_0,u_c) +Bu_c}_{\mathcal{L}_2^n[i\tau,(i+1)\tau]}\nonumber\\
		&= N(6\tau)^{\frac{1}{2}} \left(\int_{i\tau}^{(i+1)\tau}\left\vert A_cx_c(\hat{t};x_0,u_c) +Bu_c(\hat{t})\right\vert^2\dhatt\right)^{\frac{1}{2}}\nonumber.
\end{align*}
 This, together with \eqref{PropTechnical4}, gives
 \begin{align}
 	\operatorname{d}\!\big(&x_c(i\tau+t ;x_0,u_c),\pi_j\big) \label{MidTechnical0}\\
 	&\leq N(6\tau)^{\frac{1}{2}}\left(\int_{0}^{\tau}\left\vert A_cx_c(i\tau+\hat{t};x_0,u_c) + B_cu_c^i(\hat{t}) \right\vert^2\dhatt\right)^{\frac{1}{2}}.\nonumber
 \end{align}
 We then note from \eqref{AlgorithmTechnical3} that
 \begin{equation*}
 	\restr{x_c(i\tau+\hat{t};x_0,u_c)}{[0,\tau]} \in \restr{\mathbb{P}_N^n}{[0,\tau]}.
 \end{equation*}
  Furthermore, it follows from \eqref{Technical6} that 
 \begin{equation*}
 	x_c(i\tau;x_0,u_c) = x_d(i;x_0,u_d),
 \end{equation*}
 and that 
 \begin{equation*}
 	x_c(i\tau+t_{N,j};x_0,u_c) = X_j^i,
 \end{equation*}
 for all $j=1,2,\ldots,N$. This therefore enables the utilization of \eqref{NormComputation} to write \eqref{MidTechnical0} as
 \begin{align}
 	\operatorname{d}\!\big(&x_c(i\tau+t ;x_0,u_c),\pi_j\big)\\
 	&\leq N(6\tau)^{\frac{1}{2}} \bigg( \big\vert A_cx_d(i;x_0,u_d)+B_cu_d(i)\big\vert^2w_{N,0}\nonumber\\
 	& \hspace{2.5cm} + \sum_{j=1}^{N}\big\vert A_cX_j^i + B_cU_j^i \big \vert^2w_{N,j}\bigg)^{\frac{1}{2}},\nonumber
 \end{align}
which, after the substitution of \eqref{QuadratureWeight}, gives
\begin{align}
		\operatorname{d}\!\big(&x_c(i\tau+t ;x_0,u_c),\pi_j\big) \label{MidInequality1}\\ 
		&\hspace{1cm}\leq\frac{N(6\tau)^{\frac{1}{2}}}{N+1}\Bigg(\big\vert A_cx_d(i;x_0,u_d)+B_cu_d(i)\big\vert ^2 \tau\nonumber\\
		&\hspace{1.5cm} + \sum_{j=1}^{N}\left\vert A_cX_j^i+B_cU_j^i\right\vert^2\frac{\tau-t_{N,i}}{|\mathfrak{L}_N^\tau(t_{N,i})|^2} \Bigg)^{\frac{1}{2}}.\nonumber
\end{align}
To obtain a compact representation of the bound, we define the matrix 
\begin{equation*}
	W_N^\tau = \begingroup 
	\setlength\arraycolsep{2pt}\begin{bmatrix}
		\frac{\tau-t_{N,1}}{|\mathfrak{L}_N^\tau(t_{N,1})|^2}I&0&\cdots&0\\
		0&\frac{\tau-t_{N,2}}{|\mathfrak{L}_N^\tau(t_{N,2})|^2}I&\cdots&0\\
		\vdots&\vdots&\ddots&\vdots\\
		0&0&\cdots&\frac{\tau-t_{N,N}}{|\mathfrak{L}_N^\tau(t_{N,N})|^2}I
	\end{bmatrix}\endgroup,
\end{equation*}
and accordingly construct the matrix 
\begin{equation}\label{Delta}
	\Delta_N^\tau(A_c,B_c)\!=\!\begingroup 
	\setlength\arraycolsep{1pt}\begin{bmatrix}
		I\otimes A_c&I\otimes B_c
	\end{bmatrix}\endgroup^\top\!W_{N}^\tau \begingroup\setlength\arraycolsep{1pt}\begin{bmatrix}
		I\otimes A_c&I\otimes B_c
	\end{bmatrix}\endgroup\!. 
\end{equation}
It now follows from \eqref{MidInequality1} that 
\begin{equation}\label{MidInequality2}
	\begin{aligned}
		\operatorname{d}\!\big(x_c(i\tau+t;x_0,&u_c),\pi_j\big)\\
		& \leq \frac{6^{\frac{1}{2}}\tau N}{N+1}\big\vert A_cx_d(i;x_0,u_d)+B_cu_d(i)\big\vert \\
		&\quad +\frac{(6\tau)^{\frac{1}{2}}N}{N+1} \left \vert \begin{bmatrix}
			\operatorname{vec}(X^i)\\\operatorname{vec}(U^i)
		\end{bmatrix}\right\vert_{\Delta_N^\tau (A_c,B_c)}.
	\end{aligned}
\end{equation}
We utilize \eqref{MidInequality2} to obtain the following result, which measures the distance of a state value in between two sampled times from a region $\pi\subset\mathbb{R}^n$. 
\begin{theorem}\label{ThMeasurement}
	Let $\bm{\Sigma}_c$ be an $N$-th order interpolator of $\bm{\Sigma}_d$ with respect to a sampling time $\tau$. Given regions of interest $\pi_0,\pi_1,\cdots,\pi_\ell\subset\mathbb{R}^n$ and an initial condition $x_0\in\mathbb{R}^n$, for some $\ell \in \mathbb{Z}_{>0}$, let the control sequence $u_d:\llbracket0,\ell\rrbracket \rightarrow\mathbb{R}^m$ be such that \eqref{DiscreteRegionSatisfaction} holds. Take any continuous-time control input $u_c\in\mathcal{U}_c(x_0,u_d)$. For any integer $0\leq i\leq \ell-1$, let the function $u_c^i \in \mathcal{U}_c^i(x_0,u_d)$ and the matrices $U^i\in\mathbb{R}^{m\times N}$ and $X^i\in\mathbb{R}^{n\times N}$ be such that \eqref{PropTechnical4}, \eqref{AlgorithmTechnical1}, and \eqref{AlgorithmTechnical2} hold. Then, given a region $\pi\subset\mathbb{R}^n$, for all $t\in[0,\tau]$, we have that 
		\begin{align}
			\operatorname{d}\!\big(x_c(i\tau+t;x_0,&u_c),\pi\big)\nonumber\\
			& \leq \frac{6^{\frac{1}{2}}\tau N}{N+1}\big\vert A_cx_d(i;x_0,u_d)+B_cu_d(i)\big\vert \nonumber\\
			&\quad + \frac{(6\tau)^{\frac{1}{2}}N}{N+1} \left \vert \begin{bmatrix}
				\operatorname{vec}(X^i)\\\operatorname{vec}(U^i)
			\end{bmatrix}\right\vert_{\Delta_N^\tau (A_c,B_c)}\label{DistCondition} \\
			&\quad + \min \left\{\operatorname{dH}\big(\pi_i,\pi \big), \operatorname{dH}\big(\pi_{i+1},\pi \big)\right\},\nonumber
		\end{align}
	where $\Delta_N^\tau(A_c,B_c)$ is defined as in \eqref{Delta}.
\end{theorem}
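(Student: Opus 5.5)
The plan is to combine the bound \eqref{MidInequality2}, which the preceding discussion already establishes for the distance to the regions $\pi_j$ with $j\in\{i,i+1\}$, with the triangle-type estimate \eqref{DistanceBound}. First, fix $i\in\llbracket0,\ell-1\rrbracket$ and $t\in[0,\tau]$. Since $u_c\in\mathcal{U}_c(x_0,u_d)$, Theorem~\ref{ThInputSetCharacterization} and its proof give \eqref{AlgorithmTechnical3} and \eqref{Technical6} for the chosen $X^i$, $U^i$. In particular $x_c(j\tau;x_0,u_c)=x_d(j;x_0,u_d)\in\pi_j$ for $j\in\{i,i+1\}$, by \eqref{DiscreteRegionSatisfaction}, which is \eqref{ContinuousRegionSatisfaction}.

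Next, I would rederive \eqref{MidInequality2} carefully for $j\in\{i,i+1\}$. The chain runs as follows. Bound $\operatorname{d}(x_c(i\tau+t),\pi_j)$ by $|x_c(i\tau+t)-x_c(j\tau)|\le\tau\|\dot x_c\|_{\mathcal{L}_\infty[i\tau,(i+1)\tau]}$. Then observe that $\dot x_c=A_cx_c+B_cu_c$ is a polynomial of degree at most $N$ on the interval, and apply the cited Nikolskii-type inverse inequality from \cite{timan2014theory} to pass from the $\mathcal{L}_\infty$ norm to the $\mathcal{L}_2$ norm. Next, evaluate that $\mathcal{L}_2$ norm exactly with the Gauss--Radau rule \eqref{NormComputation}, using the node values $x_d(i)$, $u_d(i)$, $X^i_k$ and $U^i_k$ from \eqref{Technical6} and \eqref{AlgorithmTechnical2}. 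Substitute the weights \eqref{QuadratureWeight}, and finally use $\sqrt{a+b}\le\sqrt a+\sqrt b$ to split the node-$0$ term from the remaining nodes. The remaining nodes are collected into the weighted seminorm with $\Delta_N^\tau(A_c,B_c)$ of \eqref{Delta}. Doing so yields, for both $j=i$ and $j=i+1$,
\[
\operatorname{d}\big(x_c(i\tau+t;x_0,u_c),\pi_j\big)\le E_i,
\]
where $E_i$ denotes the sum of the first two terms on the right of \eqref{DistCondition}.

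Finally, I would apply \eqref{DistanceBound} with $z^*=x_c(i\tau+t;x_0,u_c)$ and $\pi'=\pi_j$, which gives $\operatorname{d}(z^*,\pi)\le E_i+\operatorname{dH}(\pi_j,\pi)$ for $j=i$ and for $j=i+1$. Taking the better of the two choices yields the minimum in \eqref{DistCondition}. The statement \eqref{DistanceBound} itself I would justify quickly. For any $z'\in\pi'$ we have $\operatorname{d}(z^*,\pi)\le|z^*-z'|+\operatorname{d}(z',\pi)\le|z^*-z'|+\operatorname{dH}(\pi,\pi')$, and then we take the infimum over $z'$.

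The main obstacle is the middle step, getting constants and indices right. The inverse inequality must be applied on an interval of length $\tau$ with the correct scaling, giving $N(6/\tau)^{1/2}$ times the $\mathcal{L}_2$ norm. Also, the sum in \eqref{MidInequality1} must use the weights $w_{N,k}$ indexed by the node $k$, not by $i$. Note in addition that \eqref{NormComputation} is exact here because $|\dot x_c|^2$ has degree at most $2N$. Once these are checked, the rest is a direct assembly of the earlier results.
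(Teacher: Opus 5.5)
Your proposal is correct and matches the paper's argument. The paper's proof applies \eqref{DistanceBound} with $\pi'=\pi_j$ for $j\in\{i,i+1\}$, combines it with \eqref{MidInequality2}, and takes the minimum over $j$; your re-derivation of \eqref{MidInequality2} (mean-value bound, inverse inequality with constant $N(6/\tau)^{1/2}$, exact Gauss--Radau evaluation with the weights \eqref{QuadratureWeight}, then $\sqrt{a+b}\le\sqrt{a}+\sqrt{b}$) follows the same chain the paper gives just before the theorem, with the index clean-up you point out.
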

\begin{proof}
	Let $i\in\llbracket0,\ell\rrbracket$ and $t\in[0,\tau]$. Then, for $j\in\{i,i+1\}$, it follows from \eqref{DistanceBound} that
	\begin{equation*}
		\begin{aligned}
			\operatorname{d}\!\big(x_c(i\tau+t;x_0,u_c),\pi\big) \leq & \, \operatorname{d}\!\big(x_c(i\tau+t;x_0,u_c),\pi_j\big)\\
			&+ \operatorname{dH}\!\big(\pi_j,\pi\big).
		\end{aligned}
	\end{equation*}
	which, as a result of \eqref{MidInequality2}, immediately gives \eqref{DistCondition}.
\end{proof}
\begin{remark}		
Theorem~\ref{ThMeasurement} gives an \emph{upper bound}, on the distance of state values (between two consecutive sampled times) from a given region, that depends on the solutions of \eqref{AlgorithmTechnical1}. More specifically, given an initial condition $x_0\in\mathbb{R}^n$ and a control sequence $u_d:\llbracket0,\ell\rrbracket \rightarrow\mathbb{R}^m$ that establishes \eqref{DiscreteRegionSatisfaction} (for some regions of interest $\pi_0,\pi_1,\cdots,\pi_\ell\subset\mathbb{R}^n$), for any region $\pi\subset\mathbb{R}^n$, it follows from Theorem~\ref{ThMeasurement} that any continuous-time control input $u_c\in\mathcal{U}_c(x_0,u_d)$ enforces \eqref{DistCondition}, which depends on matrices $X^i$ and $U^i$ that satisfy \eqref{AlgorithmTechnical1}. Considering that these matrices are \emph{not} necessarily unique, \eqref{DistCondition} suggests that one may \emph{reduce} the distance between $x_c(i\tau+t;x_0,u_c)$ and $\pi$ by constructing $u_c$ on the basis of matrices $X^i$ and $U_i$ that are obtained by minimizing 
\begin{equation*}
	\begin{aligned}
		\left\vert \begin{bmatrix}
			\operatorname{vec}(X^i)\\\operatorname{vec}(U^i)
		\end{bmatrix}\right\vert_{\Delta_N^\tau (A_c,B_c)}
	\end{aligned}
\end{equation*}
with respect to \eqref{AlgorithmTechnical1}. We note that such minimization problem can be cast into quadratic optimization with equality constraints, which admits a closed-form solution. Thus, by solving this optimization problem for all $0\leq i\leq \ell-1$ and constructing $u_c$ accordingly, one enforces the state $x_c(i\tau+t;x_0,u_c)$ to remain as \emph{close} as possible to the region $\pi$. \demo
\end{remark}

Measuring the distance of a state value in between sampled times from a given region in the state-space, Theorem~\ref{ThMeasurement} gives a measure of the extent to which a controlled continuous-time trajectory may violate a given specification. We further demonstrate this in the following examples. 
\begin{example}
	We consider a safety-critical control problem. Given a `safe' region $\pi \subset\mathbb{R}^n$ and an initial condition $x_0\in\pi$, for some $\ell\in\mathbb{Z}_{>0}$, let the control sequence $u_d: \llbracket0,\ell\rrbracket\rightarrow\mathbb{R}^m$ be such that 
	\begin{equation}\label{Safety}
		\forall i\in \llbracket0,\ell\rrbracket:\quad x_d(i;x_0,u_d) \in \pi,
	\end{equation} 
	\textit{i.e.,} subject to the control sequence $u_d$, the discrete-time model $\bm{\Sigma}_d$ admits a state trajectory that remains within the safe region over the time interval $\llbracket0,\ell\rrbracket$. 
	We note that such $u_d$ can be obtained using techniques such as \emph{dynamic programming} \cite{bertsekas2012dynamic} and \emph{set-invariance} control \cite{blanchini2008set}.
	
	We observe that \eqref{Safety} is obtained from \eqref{DiscreteRegionSatisfaction} by choosing $\pi_i = \pi$ for all $i=0,1,\ldots,\ell$. Thus, by taking any $u_c\in\mathcal{U}_c(x_0,u_d)$, we conclude from Theorem~\ref{ThMeasurement} that for any integer $0\leq i\leq \ell-1$, 
	\begin{align}
			\forall t\in[0,\tau]: \quad \operatorname{d}\big(&x_c(i\tau+t;x_0,u_c),\pi\big) \label{SmallBound} \\
			&\leq \frac{6^{\frac{1}{2}}\tau N}{N+1}\big\vert A_cx_d(i;x_0,u_d)+B_cu_d(i)\big\vert \nonumber \\
			&\quad + \frac{(6\tau)^{\frac{1}{2}}N}{N+1} \left \vert \begin{bmatrix}
				\operatorname{vec}(X^i)\\\operatorname{vec}(U^i)
			\end{bmatrix}\right\vert_{\Delta_N^\tau (A_c,B_c)},\nonumber
	\end{align}
	which gives an upper bound on the distance between the state $x_c(i\tau+t;x_0,u_c)$ and the safe region $\pi$. Importantly, \eqref{SmallBound} indicates that the distance from $x_c(i\tau+t;x_0,u_c)$ to $\pi$ decreases with respect to the sampling time $\tau$. This basically means that for a sufficiently small $\tau$, the state $x_c(i\tau+t;x_0,u_c)$ remains desirably close to the safe region $\pi$. As \eqref{SmallBound} holds for all $0\leq i\leq \ell-1$, we conclude that for a sufficiently small $\tau$, the controlled continuous-time trajectory remains desirably close to the safe region $\pi$ over the time interval $[0,\ell\tau]$.
\end{example}
\begin{example}
	We now consider a signal temporal logic (STL) control problem. For a vector $\Gamma \in \mathbb{R}^n$ and a scalar $\gamma\in\mathbb{R}$, we define the predicate $\mu$ as
	\begin{equation*}
		\mu(x):=\begin{cases}
			\mathrm{True}, \quad \, \text{if} \;\; \Gamma^\top x + \gamma\geq 0,\\
			\mathrm{False}, \quad \text{if} \;\; \Gamma^\top x + \gamma< 0,
		\end{cases}
	\end{equation*}
	and consider the specification 
	\begin{equation*}
		\mathscr{S} = \mathrm{G}_{[t_1,t_2]} \mu,
	\end{equation*}
	where $0\leq t_1 < t_2$ (for further details on STL, see, \textit{e.g.,} \cite{lindemann2025formal}). We note that the formula $\mathscr{S}$ requires the predicate $\mu$ to \emph{always} remain true over the time interval $[t_1,t_2]$. Given an initial condition $x_0\in\mathbb{R}^n$, for some $\ell\in\mathbb{Z}_{>0}$, let the control sequence $u_d: \llbracket0,\ell\rrbracket\rightarrow\mathbb{R}^m$ be such that
	\begin{equation*}
		x_d(0; x_0,u_d) \models \mathrm{G}_{\llbracket i_1,i_2\rrbracket} \mu,
	\end{equation*}
	where $i_1,i_2\in\llbracket0,\ell\rrbracket$ are such that $i_1\tau\leq t_1<t_2 \leq i_2\tau$. This implies that 
	\begin{equation}\label{ExmpTechnical1}
		\forall i\in\llbracket i_1,i_2\rrbracket: \quad \mu\big(x_d(i; x_0,u_d)\big) = \mathrm{True},
	\end{equation}
	which induces a path \eqref{Path} such that 
	\begin{equation*}
		\forall i\in\llbracket i_1,i_2\rrbracket:\quad \pi_i = \big\{x_d(i; x_0,u_d)\big\},
	\end{equation*}
	and that \eqref{DiscreteRegionSatisfaction} holds. Then, taking any $u_c\in\mathcal{U}_c(x_0,u_d)$, for any $i_1\leq i\leq i_2$, we let 
	\begin{equation*}
		\pi = \big\{x_d(i; x_0,u_d)\big\},
	\end{equation*}
	and immediately conclude from Theorem~\ref{ThMeasurement} that \eqref{SmallBound} holds. 
	
	We now define the STL score (see, \textit{e.g.,} \cite{belta2019formal,lindemann2025formal}) of $x_c(t;x_0,u_c)$ with respect to the predicate $\mu$ as 
	\begin{equation*}
		\rho\big(x_c(t;x_0,u_c)\big) = \Gamma^\top x_c(t;x_0,u_c) + \gamma.
	\end{equation*}
	We let $i\in\llbracket0,\ell\rrbracket$ and $\hat{t}\in[0,\tau]$ be such that $t = i\tau+\hat{t}$. Recalling that $u_c\in\mathcal{U}_c(x_0,u_d)$, we use \eqref{CompleteSet} to conclude \eqref{InterpolatingCondition}, which implies that $x_c(i\tau;x_0,u_c) = x_d(i;x_0,u_d)$. 
	This then enables us to write 
	\begin{equation*}
		\begin{aligned}
			\rho\big(x_c(t;x_0,u_c)\big) =& \  \Gamma^\top \big(x_c(i\tau+\hat{t};x_0,u_c) - x_d(i;x_0,u_d)\big)\\
			&+ \rho\big(x_d(i\tau;x_0,u_d)\big),
		\end{aligned}
	\end{equation*}
	which, together with \eqref{SmallBound} and \eqref{ExmpTechnical1}, gives
	\begin{align}
			\Big\vert \rho\big(x_c(t;x_0,u_c)\big) &- \rho\big(x_d(i\tau;x_0,u_d)\big) \Big \vert \nonumber\\
			&\leq  \vert \Gamma \vert \operatorname{d}\big(x_c(i\tau+\hat{t};x_0,u_c),\pi\big)\label{ExmpTechnical2}\\
			&\leq \frac{6^{\frac{1}{2}}\tau N \vert \Gamma \vert}{N+1}\big\vert A_cx_d(i;x_0,u_d)+B_cu_d(i)\big\vert \nonumber\\
			&\quad + \frac{(6\tau)^{\frac{1}{2}}N \vert \Gamma \vert}{N+1} \left \vert \begin{bmatrix}
				\operatorname{vec}(X^i)\\\operatorname{vec}(U^i)
			\end{bmatrix}\right\vert_{\Delta_N^\tau (A_c,B_c)}.\nonumber
	\end{align}
	We observe that for any $i_1\leq i\leq i_2$, \eqref{ExmpTechnical2} gives an upper bound on the deviation from the discrete-time STL score with respect to the predicate $\mu$. We therefore emphasize that \eqref{ExmpTechnical2} gives a measure of the extent to which the continuous-time trajectory $x_c(\cdot;x_0,u_d)$ may violate $\mathscr{S}$ (between two consecutive sampled times). We further note from \eqref{ExmpTechnical2} that for a sufficiently small sampling time $\tau$, the continuous-time STL score $\rho\big(x_c(t;x_0,u_c)\big)$ becomes desirably close to its discrete-time counterpart $\rho\big(x_d(i\tau;x_0,u_d)\big)$. This indicates that by choosing the sampling time $\tau$ sufficiently small, the extent of (possible) specification violation becomes desirably small. 
\end{example}

To recapitulate, employing system interpolation for control synthesis enables us to measure to what extent a specification may be violated in between sampled times. In fact, given an initial condition $x_0$ and a control sequence $u_d$ that enforces a discrete-time model $\bm{\Sigma}_d$ of a continuous-time system $\bm{\Sigma}_c$ to fulfill a specification $\mathscr{S}$, choosing any continuous-time control input $u_c\in\mathcal{U}_c(x_0,u_d)$ enables us to 1) ensure that the controlled state trajectory satisfies $\mathscr{S}$ at each sampled time (as indicated by \eqref{ContinuousRegionSatisfaction}), and 2) measure to what extent this trajectory may violate $\mathscr{S}$ in between sampled time (as shown in Theorem~\ref{ThMeasurement}). 
\section{Numerical Simulation}\label{Sec_Simulation}
We consider temporal logic control of a robotic agent with \emph{double-integrator} dynamics. Subject to a control force $F_c$, the robot $\bm{\Sigma}_c$ experiences the velocity $v$ and the displacement $x$. This is captured by the equation of motion 
\begin{equation*}
	\begin{aligned}
		\dot{x}(t) &= v(t),\\
		\dot{v}(t) &= \frac{1}{m_c} F_c(t),
	\end{aligned}
\end{equation*}
where $m_c$ denotes the mass of the robot. We suppose that the robot is initially at full rest, \textit{i.e.,} $x(0) = 0$ and $v(0) = 0$. The objective is to apply the control force $F_c$ such that the robot $\bm{\Sigma}_c$ evolves according to the STL formula
\begin{equation*}
	\begin{aligned}
		\mathscr{S}_c = & \ \mathrm{F}_{[0.2,0.8]}\big(x(t) \leq -2 \big) \\
		&\wedge \mathrm{F}_{[1,1.4]}\big(x(t) \geq 2 \big)\\
		& \wedge \mathrm{F}_{[1.6,2]}\big(x(t) \leq -2 \big)\\
		& \wedge \mathrm{G}_{[0,2]} \big(v(t) \leq 15 \wedge v(t) \geq -15\big).
	\end{aligned}
\end{equation*}
Specifically, the formula $\mathscr{S}_c$ requires that
\begin{enumerate}
	\item the displacement $x$ \emph{eventually} becomes less that $-2$ between $0.2$ and $0.8$ seconds;
	\item the displacement $x$ \emph{eventually} surpasses $2$ between $1$ and $1.4$ seconds;
	\item the displacement $x$ \emph{eventually} drops below $-2$ between $1.6$ and $2$ seconds;
	\item the velocity $v$ \emph{always} remains between $-15 \unit{\meter/\second}$ and $15 \unit{\meter/\second}$. 
\end{enumerate}
We will utilize system interpolation to determine the control force $F_c$ subject to which the robot $\bm{\Sigma}_c$ fulfills these requirements. 

Taking $x_c = \operatorname{col}(x,v)$ and $u_c = \frac{1}{m_c}F_c$, we immediately observe that $\bm{\Sigma}_c$ is governed by the dynamics \eqref{ContinuousSystem} with 
\begin{equation*}
	\begin{aligned}
		A_c &= \begin{bmatrix}
			0&1\\
			0&0
		\end{bmatrix}, &B_c&=\begin{bmatrix}
		0\\
		1
		\end{bmatrix}.
	\end{aligned}
\end{equation*}
 We will therefore establish the objective by synthesizing the continuous-time control $u_c$ such that 
\begin{equation}\label{Technical8}
	x_c(0; 0, u_c) \models \mathscr{S}_c.
\end{equation}
We accomplish this by splitting the continuous-time control synthesis into the following steps.

First, we discretize the continuous-time dynamics \eqref{ContinuousSystem}. After choosing the sampling time $\tau = 0.2\unit{s}$ and the integer $N = 5$, we conclude from Theorem~\ref{Th_Design} that there exist matrices $A_d$ and $B_d$ such that \eqref{ContinuousSystem} is an $N$-th order interpolator of \eqref{DiscreteSystem}. We then utilize Theorem~\ref{Th_Design} to obtain
\begin{equation*}
	\begin{aligned}
		A_d&= \begin{bmatrix}
			0.6990&0.1398\\
			0.0000&0.6990
		\end{bmatrix}, &B_d&=\begin{bmatrix}
		0.0000\\0.1398
		\end{bmatrix}.
	\end{aligned}
\end{equation*} 

As the next step, we design a control sequence $u_d$ such that \eqref{DiscreteSystem} satisfies the formula $\mathscr{S}_c$ at sampled times. Discretizing $\mathscr{S}_c$ with respect to the sampling time $0.2\unit{\second}$, we obtain the (discrete-time) STL formula 
\begin{equation*}
	\begin{aligned}
		\mathscr{S}_d =& \ \mathrm{F}_{\llbracket 1,4\rrbracket}\big(x_{d,1}(i) \leq -2 \big) \\
		&\wedge \mathrm{F}_{\llbracket 5,7\rrbracket}\big(x_{d,1}(i) \geq 2 \big)\\
		& \wedge \mathrm{F}_{\llbracket 8,10\rrbracket}\big(x_{d,1}(i) \leq -2 \big)\\
		& \wedge \mathrm{G}_{\llbracket 0,10\rrbracket} \big(x_{d,2}(i) \leq 15 \wedge x_{d,2}(i) \geq -15\big),
	\end{aligned}
\end{equation*}
where $x_{d,1}$ and $x_{d,2}$ respectively denote the first and second components of $x_d$, \textit{i.e.,} $x_d = \operatorname{col}(x_{d,1},x_{d,2})$.
We then adopt the approach taken in \cite{7039363} to encode the formula $\mathscr{S}_d$ as mixed-integer linear constraints on the control sequence $u_d$ (for further details, see \cite[Section IV.B]{7039363} and \cite[Section 5.2]{belta2019formal}). This then leads to a mixed-integer linear programming feasibility problem whose solution gives the control sequence $u_d:\llbracket0,10\rrbracket\rightarrow\mathbb{R}$ such that
\begin{equation*}
	x_d(0;x_0,u_d)\models\mathscr{S}_d. 
\end{equation*}
The mixed-integer linear program is solved using the optimization toolbox of  MATLAB R2022b. 
\begin{figure}
	\centering
	\begin{subfigure}[b]{0.5\textwidth}
		\includegraphics[width=1\textwidth]{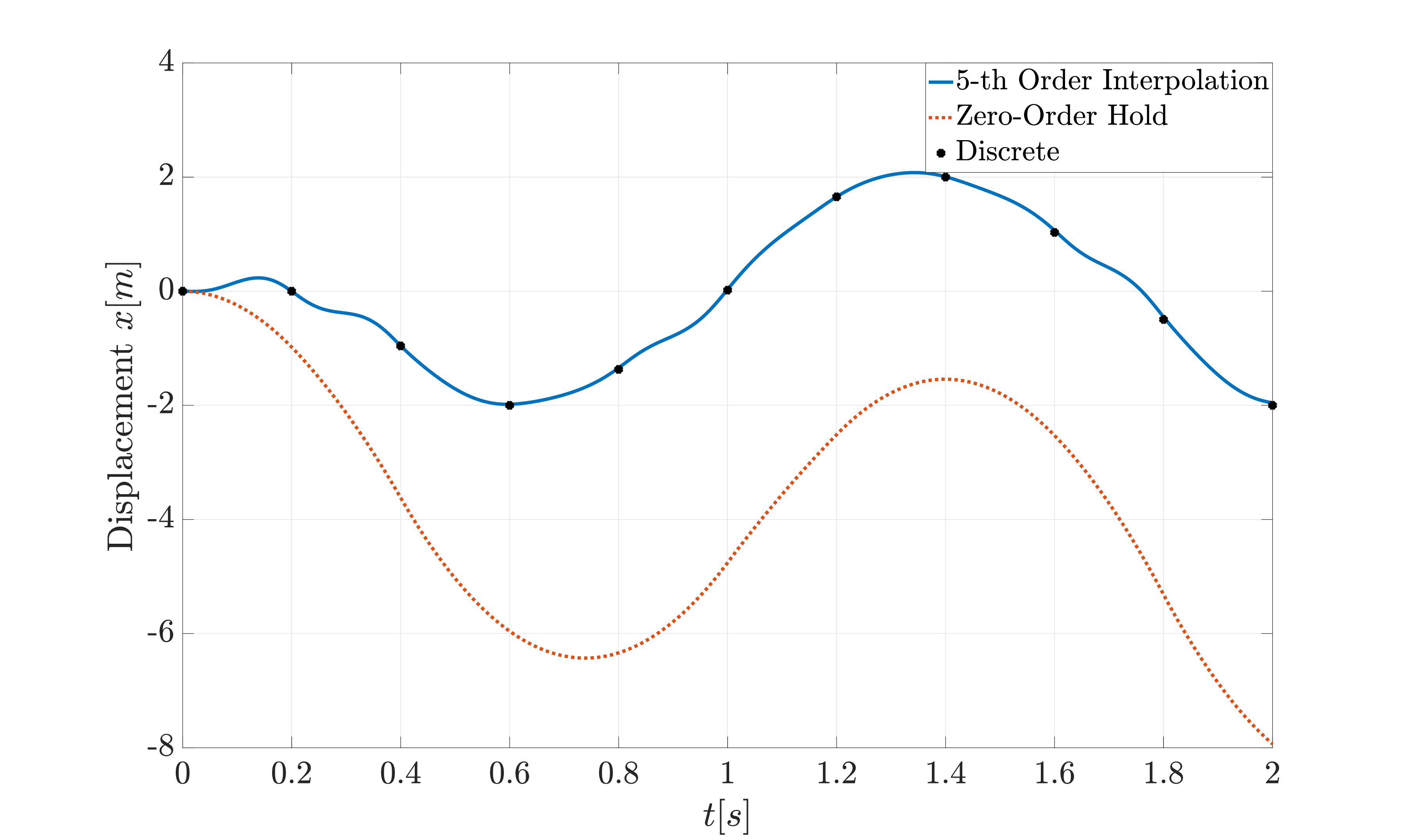}
		\caption{Displacement $x$.}
		\label{Displacement}
	\end{subfigure}
	\vfill
	\begin{subfigure}[b]{0.5\textwidth}
		\includegraphics[width=1\textwidth]{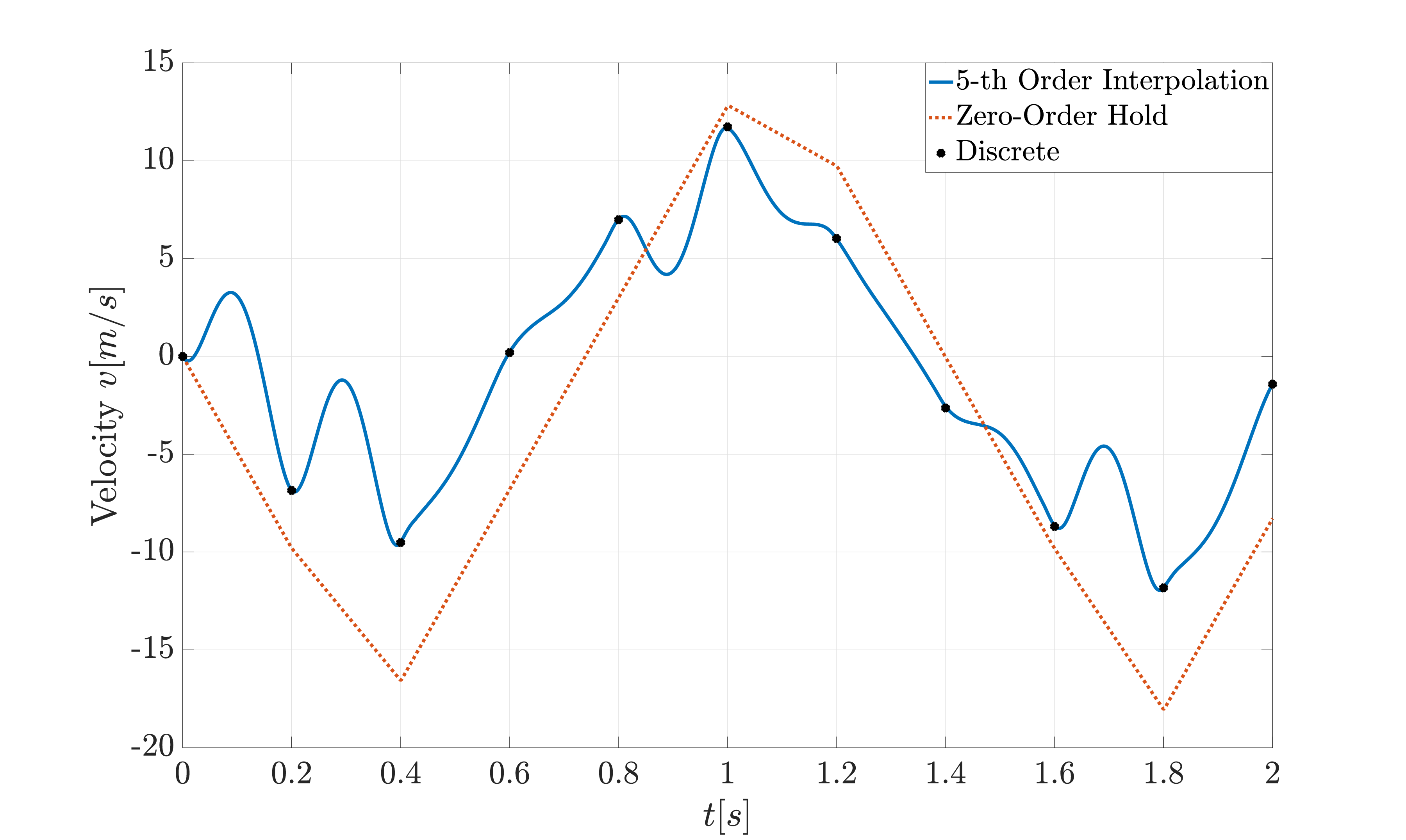}
		\caption{Velocity $v$.}
		\label{Velocity}
	\end{subfigure}
	\caption{Subject to the continuous-time control $u_c\in\mathcal{U}_c(0,u_d)$ (indicated by solid lines in Figure~\ref{Inputs}), the robot $\bm{\Sigma}_c$ experiences a displacement and a velocity that comply with the requirements specified by the STL formula $\mathscr{S}_c$. By contrast, subject to a continuous-time control $u_c$ obtained by ZOH interpolation (indicated by dashed lines in Figure~\ref{Inputs}), neither the displacement of the robot nor its velocity adhere to the formula $\mathscr{S}_c$.}
	\label{Outputs}
\end{figure}

As the final step, we use the designed control sequence $u_d$ to construct the set of continuous-time inputs $\mathcal{U}_c(0,u_d)$, as in \eqref{SetUc}. We note that for $\tau = 0.2$ and $N = 5$, for any $i\in\llbracket0,9\rrbracket$, the matrix equation \eqref{AlgorithmTechnical1} admits a \emph{unique} solution. This, together with \eqref{SetUci} and \eqref{SetUc}, indicates that the set $\mathcal{U}_c(0,u_d)$ is a singleton. We thus conclude from Theorem~\ref{ThInputSetCharacterization} that there uniquely exists $u_c \in \mathbb{I}_5^{0.2}(u_d)$ that establishes 
\begin{equation*}
	\restr{x_c (\cdot; x_0,u_c)}{[0,2]} \in \mathbb{I}_5^{0.2} \Big(\restr{x_d(\cdot;x_0,u_d)}{\llbracket 0,10\rrbracket}\Big).
\end{equation*}

The displacement $x$ and the velocity $v$ of the robot $\bm{\Sigma}_c$ are depicted in Figure~\ref{Outputs} (indicated by solid lines), whereas the continuous-time control $u_c \in \mathcal{U}_c(0,u_d)$ is depicted in Figure~\ref{Inputs} (indicated by solid lines). It is clear from Figure~\ref{Outputs} that the continuous-time control $u_c \in \mathcal{U}_c(0,u_d)$ enforces the robot $\bm{\Sigma}_c$ to evolve in accordance with the STL formula $\mathscr{S}_c$, \textit{i.e.,} $u_c \in \mathcal{U}_c(0,u_d)$ is such that \eqref{Technical8} holds. Particularly, it can be observed from Figure~\ref{Displacement} that the displacement $x$ reaches $-2 \unit{m}$ at $0.6$ and $2.0$ seconds, whereas it surpasses $2\unit{\meter}$ over the interval $[1.2,1.4]$. According to Figure~\ref{Velocity}, on the other hand, the velocity $v$ always remains between $-15\unit{\meter/\second}$ and $15\unit{\meter/\second}$. 
\begin{figure}
	\centering
	\includegraphics[width=0.5\textwidth]{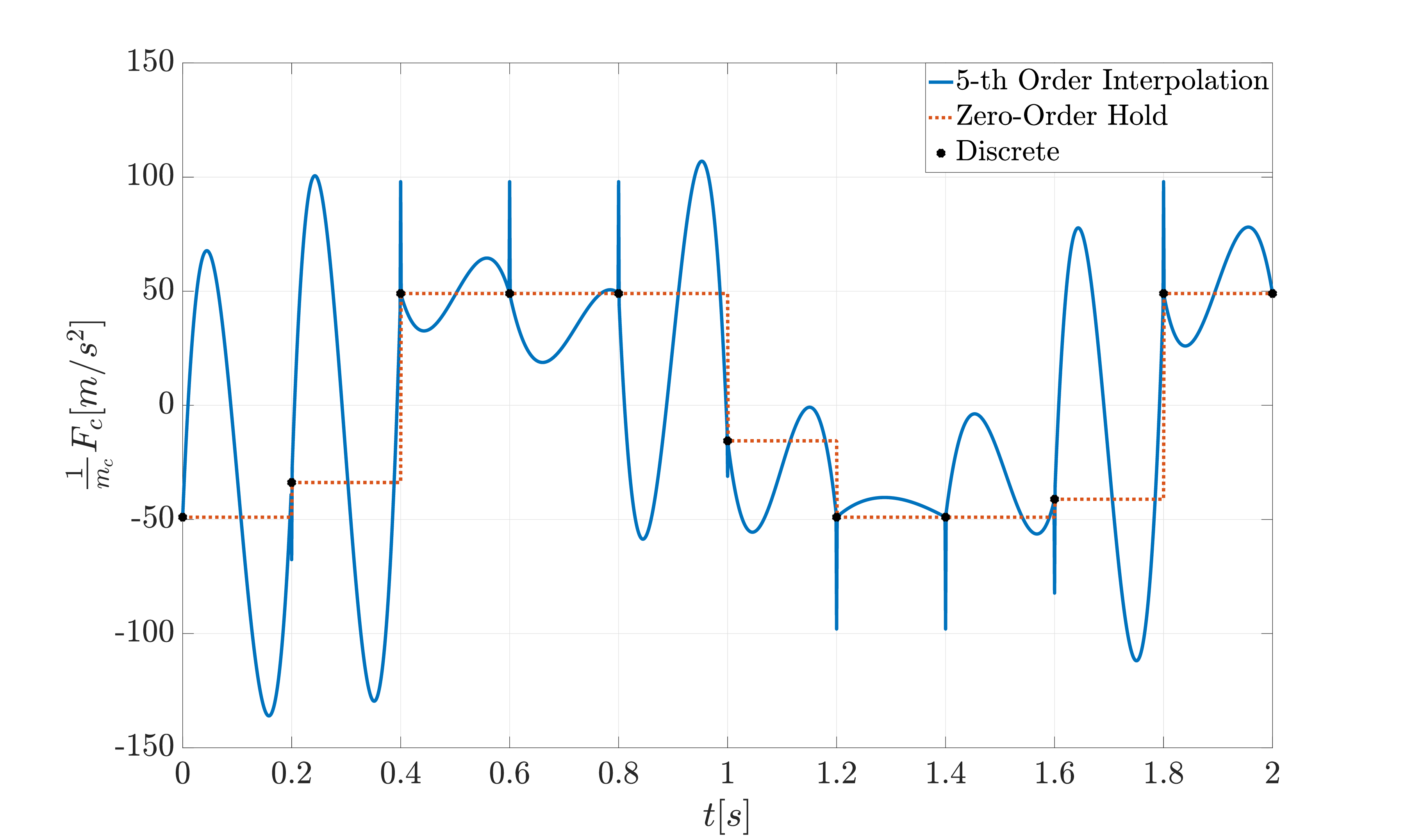}
	\caption{Subject to the continuous-time control $u_c\in\mathcal{U}_c(0,u_d)$ (indicated by solid lines), the robot $\bm{\Sigma}_c$ experiences a displacement $x$ (Figure~\ref{Displacement}) and a velocity $v$ (Figure~\ref{Velocity}) that adhere to the requirements specified by the formula $\mathscr{S}_c$. However, subject to a continuous-time control $u_c$ obtained by ZOH interpolation of the control sequence $u_d$ (indicated by dashed lines), the displacement $x$ of the robot does not comply with formula $\mathscr{S}_c$.}
	\label{Inputs}
\end{figure}

To show the efficacy of control synthesis using system interpolation, we have also depicted the displacement $x$ and the velocity $v$ of the robot $\bm{\Sigma}_c$ (indicated by dashed lines) in Figure~\ref{Outputs}, subject to a continuous-time control $u_c$ obtained as a zero-order hold (ZOH) interpolation of the control sequence $u_d$ (indicated by dashed lines in Figure~\ref{Inputs}). It is clear from Figure~\ref{Outputs} that subject to such control, the robot does not fulfill the requirements specified by the formula $\mathscr{S}_c$. Particularly, in this case, the velocity surpasses $-15\unit{\meter/\second}$ (between $1.6$ and $2.0$ seconds), while the displacement $x$ never reaches $2$ (between $1$ and $1.4$ seconds). 
\begin{remark}
	In this numerical example, the control $u_c \in \mathcal{U}_c(x_0,u_d)$ enforced the continuous-time system $\bm{\Sigma}_c$ to \emph{fully} satisfy the STL formula $\mathscr{S}_c$. In general, however, choosing $u_c \in \mathcal{U}_c(x_0,u_d)$ does not guarantee the \emph{full} satisfaction of an STL formula, but rather it only guarantees satisfaction at instances corresponding to the sampling time $\tau$. Nevertheless, such choice enables us to measure the extent of (possible) specification violation in between these instances by making use of \eqref{DistCondition}. In fact, by choosing $u_c \in \mathcal{U}_c(x_0,u_d)$, we guarantee that the divergence of state values from regions of interest (which are dictated by the formula) always remain within the bound specified by \eqref{DistCondition}.\demo
	\end{remark}
\section{Conclusion}\label{Sec_Conclusion}
In this paper, we developed a formal framework for comparison of systems across different time domains. We accomplished this by introducing the notion of system interpolation, which determines whether the input-state trajectories of a continuous-time system can be realized as piecewise polynomial interpolations of input-state trajectories of a discrete-time system. Here, we characterized piecewise polynomial interpolation of a discrete-time signal as a continuous-time function such that, at given sampling instants, it coincides with the discrete-time signal and that, over each interval between consecutive instants, it is represented by a polynomial of a prescribed degree. We then showed that in order to investigate system interpolation, it suffices to check whether the continuous-time dynamics can generate piecewise polynomial interpolations of the discrete-time input-state trajectories over an interval that corresponds to a single (discrete) time step. Accordingly, by representing these piecewise polynomial interpolations as linear combinations of shifted Legendre polynomials (up to a prescribed degree), we characterized system interpolation as a subspace inclusion that is solely in terms of the parameters of the continuous-time and discrete-time systems. Such characterization is computationally efficient as it can be formulated as a simple rank condition. Afterwards, for a given discrete-time input-state trajectory, we characterized the class of all inputs that enforce the continuous-time system to admit input-state trajectories that are realized as piecewise polynomial interpolations of this discrete-time trajectory. Subsequently, we exploited system interpolation to discretize a given continuous-time dynamics into a discrete-time one. Lastly, for a given specification, we employed system interpolation for control synthesis to ensure that the controlled continuous-time trajectories adhere to the specification at each sampling instants and to measure the extent of (possible) specification violation over intervals between these instances. 

For the future, we aim to employ the notion of system interpolation to conduct (signal/linear) temporal logic control for continuous-time systems. Particularly, our goal is to combine system interpolation with temporal logic design techniques developed for discrete-time systems to provide guarantees on continuous-time ones that interpolate them. 
\bibliographystyle{IEEEtran}
\bibliography{Reference.bib}
\begin{IEEEbiography}[{\includegraphics[width=1in,height=1.25in,clip,keepaspectratio]{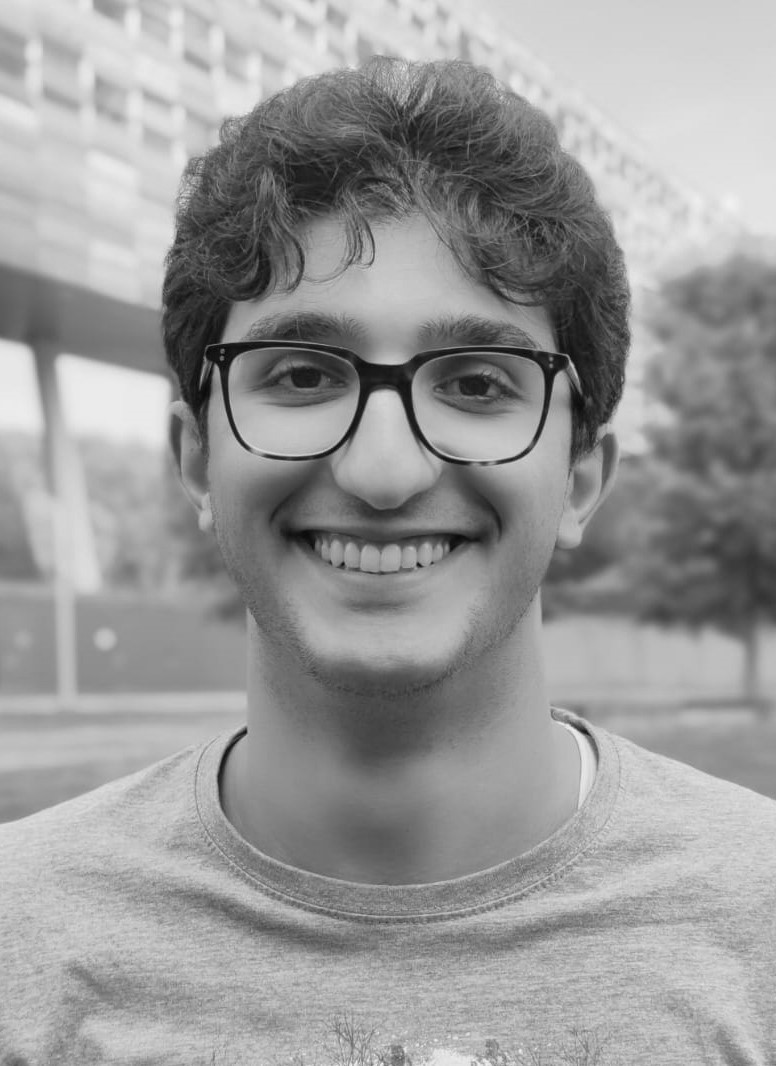}}]{Armin Pirastehzad} (Member, IEEE) earned his Ph.D. in Applied Mathematics, specializing in Systems and Control, from the University of Groningen, the Netherlands, in 2025. He previously obtained his B.Sc. and M.Sc. degrees in Electrical Engineering, also specializing in Systems and Control, from the University of Tehran, Iran, in 2017 and 2020, respectively. He is currently a Postdoctoral Researcher at the Bernoulli Institute for Mathematics, Computer Science, and Artificial Intelligence, University of Groningen.
\end{IEEEbiography}
\begin{IEEEbiography}[{\includegraphics[width=1in,height=1.25in,clip,keepaspectratio]{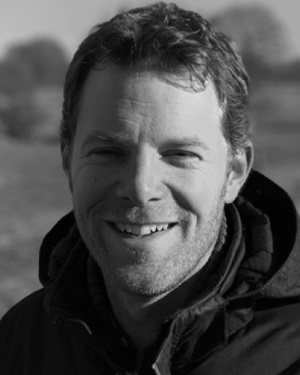}}]{Bart Besselink} (Senior Member, IEEE) received the M.Sc. (cum laude) degree in mechanical engineering in 2008 and the Ph.D. degree in 2012, both from Eindhoven University of Technology, Eindhoven, The Netherlands.
	
	Since 2016, he has been with the Bernoulli Institute for Mathematics, Computer Science and Artificial Intelligence, University of Groningen, Groningen, The Netherlands, where he is currently an associate professor. He was a short-term Visiting Researcher with the Tokyo Institute of Technology, Tokyo, Japan, in 2012. Between 2012 and 2016, he was a Postdoctoral Researcher with the ACCESS Linnaeus Centre and Department of Automatic Control, KTH Royal Institute of Technology, Stockholm, Sweden.
	
	His main research interests are on mathematical systems theory for large-scale interconnected systems, with emphasis on contract-based design and control, compositional analysis, model reduction, and applications in intelligent transportation systems and neuromorphic computing. He is a recipient (with Xiaodong Cheng and Jacquelien Scherpen) of the 2020 Automatica Paper Prize.
\end{IEEEbiography}
\end{document}